\documentclass[11pt]{amsart}
\usepackage[mathscr]{eucal}

\textwidth16cm \textheight21cm \oddsidemargin-0.1cm
\evensidemargin-0.1cm

\usepackage{amsmath,amssymb,amsfonts,amsthm, color}

\newtheorem{thm}{\textbf{Theorem}}[section]
\newtheorem{lem}{\textbf{Lemma}}[section]

\newcommand{\Z}{\mathbb Z}
\newcommand{\R}{\mathbb R}

\newcommand{\be}{\begin{equation}}
\newcommand{\ee}{\end{equation}}
\newcommand{\und}{\;\mbox{ and }\;}
\newcommand{\nn}{\nonumber}
\newcommand{\ber}{\begin{eqnarray}}
\newcommand{\eer}{\end{eqnarray}}

\newcommand{\diam}{\mathsf{diam}\,}
\newcommand{\first}{\mathsf{first}}
\newcommand{\last}{\mathsf{last}}

\newcommand\floor[1]{{\lfloor{#1}\rfloor}}
\newcommand\ceil[1]{{\lceil{#1}\rceil}}

\begin{document}
\title{On three sets with nondecreasing diameter}
\author{Daniel Bernstein, David J. Grynkiewicz and Carl R. Yerger}

\begin{abstract}Let $[a,b]$ denote the integers between $a$ and
$b$ inclusive and, for a finite subset $X\subseteq \Z$,  let ${\diam}(X)=\max(X)-\min(X)$. We write $X<_p\,Y$ provided  $\max(X)<\min(Y)$. For a positive
integer $m$, let $f(m,m,m;2)$ be the least integer $N$ such that any $2$-coloring $\Delta: [1, N]\rightarrow \{0,1\}$ has
three monochromatic $m$-sets $B_1,\,B_2,\,B_3\subseteq [1,N]$ (not necessarily of the same color) with
$B_1<_p\, B_2 <_p\, B_3$ and  ${\diam}(B_1)\leq {\diam}(B_2)\leq
{\diam}(B_3)$. Improving upon upper and lower bounds of Bialostocki, Erd\H os and Lefmann, we show that
$f(m,m,m;2)=8m-5+\lfloor\frac{2m-2}{3}\rfloor+\delta$ for $m\geq 2$, where $\delta=1$ if $m\in \{2,5\}$ and $\delta=0$ otherwise.\end{abstract}

\maketitle

\section{Introduction}
For $a,\,b\in \R$,
 we let $[a,b]$ denote the set of integers between $a$ and $b$ inclusive.
For finite subsets $X,\,Y\subseteq \Z$, the \emph{diameter}
of $X$, denoted by ${\diam}(X)$, is defined as $\max(X)-\min(X)$.
Moreover, we say that $X<_p\,Y$ if and only if $\max(X)<\min(Y)$, meaning all the elements of $X$ come before any element from $Y$. For
positive integers $t,\,m_1,\,m_2,\,\ldots m_t,\,r\in\Z^+$, let
$f(m_1,m_2,\ldots, m_t;r)$ be the least integer $N$ such that, for
every $r$-coloring $\Delta: [1, N]\rightarrow [0,r-1]$ of the
integers $[1,N]$, there exist $t$ subsets $B_1,\,B_2,\,\ldots,
B_t\subseteq [1,N]$ with
\begin{itemize}
\item[(a)] each $B_i$ monochromatic, i.e., $|\Delta(B_i)|=1$ for $i=1,\ldots,t$,
\item[(b)] $|B_i|=m_i$ for
$i=1,2,\ldots,t$
\item[(c)] $B_1<_p\, B_2 <_p\, \ldots <_p\, B_t$, and
\item[(d)]${\diam}(B_1)\leq {\diam}(B_2)\leq\ldots\leq {\diam}(B_t)$.
\end{itemize}
A collection of
monochromatic sets $B_i$ that satisfy (b), (c) and (d) is called a
\emph{solution} to $p(m_1,m_2,\ldots, m_t;r)$.

The function $f(m_1,m_2,\ldots, m_t;r)$, and the related function
$f^*(m_1,m_2,\ldots, m_t;r)$ defined as $f(m_1,m_2,\ldots, m_t;r)$
but requiring the inequalities in (d) to be strict, have been
studied by previous authors. Bialostocki, Erd\H{o}s and Lefmann
first introduced $f(m,m,\ldots, m;r)$ in \cite{bial}, where they
determined that $f(m,m;2)=5m-3$, that $f(m,m;3)=9m-7$ and that
\be\label{oldbounds}8m-4\leq f(m,m,m;2)\leq 10m-6,\ee as well as
giving  asymptotic bounds for $t=2$. The problem was motivated in
part by zero-sum generalizations in the sense of the
Erd\H{o}s-Ginzburg-Ziv Theorem \cite{egz} \cite[Theorem
10.1]{davidbook} (see \cite{baginski} \cite{bial}
\cite{davidfourcolorzs} for a short discussion of zero-sum
generalizations, including definitions). Subsequently, Bollob\'{a}s,
Erd\H{o}s, and Jin obtained improved results for $m=2$, showing that
$4r-\log_2 r+1\leq f^*(2,2;r)\leq 4r+1$ and $f^*(2,2;2^k)=4\cdot
2^k+1$, as well as giving improved asymptotic bounds for $t$ and $r$
when $m=2$. The value of $f(m,m;4)$ was determined to be $12m-9$ in
\cite{davidfourcolor},
%
the off diagonal cases (when not all $m_i=m$) are introduced in  \cite{carl}, and
%
%
%
other related Ramsey-type problems can also be found in \cite{alon}
\cite{brown} \cite{davidandy} \cite{arobertson} \cite{arobertson2}
\cite{schultz}.

The goal of this paper is to improve the estimates from \eqref{oldbounds} to the first exact value for more than two sets. Indeed, we will show that both the upper and lower bounds of Bialostocki, Erd\H{o}s and Lefmann can be improved, resulting in the value $$f(m,m,m;2)= 8m-5+\lfloor\frac{2m-2}{3}\rfloor+\delta\quad\mbox{
for $m\geq 2$},$$ where $\delta=1$ if $m\in \{2,5\}$ and $\delta=0$ otherwise.

\section{Determination of $f(m,m,m;2)$}

Let $\Delta:X\rightarrow C$ be a coloring of a finite set $X$ by a
set of colors $C$. Let $c\in C$ and $Y\subseteq X$. Let
$x_1<x_2<\ldots<x_n$ be the integers colored by $c$ in $Y$. Then,
for integers $i$ and $j$ such that $1\leq i\leq j\leq n$, we use
the notation $\first_i^j(c,Y)$ to denote
$\{x_i,\,x_{i+1},\,\ldots,\,x_j\}$, which is the set consisting of the $i$-th through $j$-th smallest elements of $Y$ colored by $c$.  Likewise,
$\first_i(c,Y)=x_i$ is the $i$-th smallest element colored by $c$ in $Y$,  and $\first(c, Y)=\first_1(c,Y)=\min (\Delta^{-1}(c)\cap Y)$ is the first element colored by $c$ in $Y$. Similarly,
we define
$\last_i^j(c,Y)=\{x_{n-i+1},\,x_{n-i},\,\ldots,\,x_{n-j+1}\}$ to be the set consisting of the $i$-th through $j$-th largest elements of $Y$ colored by $c$,
$\last_i(c,Y)=x_{n-i+1}$ to be the $i$-th  largest element of $Y$ colored by $c$, and $\last(c,Y)=\last_1(c,Y)=\max(\Delta^{-1}(c)\cap Y)$ to be the last element of $Y$ colored by $c$. For the
sake of simplicity, a coloring $\Delta:[1,\,N]\rightarrow C$ will
be denoted by the string
$\Delta(1)\Delta(2)\Delta(3)\cdots\Delta(N)$, and $x^i$ will be
used to denote the string $xx\ldots x$ of length $i$. Hence
$\Delta :[1,\,6]\rightarrow \{0,1\}$, where $\Delta ([1,\,2])=\{0\}$,
$\Delta(3)=1$, and $\Delta ([4,6])=\{0\}$, may be represented by the
string $\Delta [1,6]=0^210^3$.

The following technical lemma will help us control the possible
$2$-colorings of $[1,3m-2]$.

\begin{lem}\label{lem2.1}
Let $m\geq 2$, let $\Delta: [1,3m-2]\rightarrow\{0,1\}$ be a $2$-coloring and let $B_1\subseteq [1,3m-2]$ be a monochromatic $m$-subset with ${\diam}(B_1)\geq
2m-2$ satisfying the following additional extremal constraints: \begin{itemize}
\item[(a)] $\max B_1:=3m-2-\beta$ is minimal, where $\beta\in [0,m-1]$;
\item[(b)] $\diam(B_1):=2m-2+\alpha$ is minimal subject to (a) holding,  where $\alpha\in [0,m-1-\beta]$. \end{itemize}
Suppose $B_1$ exists and $\Delta(B_1)=\{1\}$. Then, letting $R=[1,3m-2-\beta]$, one of the following holds.

\begin{itemize}
\item[(i)]
\begin{enumerate}
\item[$\bullet$] $\beta\leq m-2$ and  $|\Delta^{-1}(0)\cap R|\geq m$
\item[$\bullet$] $\Delta R=1^{m-1-\beta-\nu}H_00H_11^{1+\nu}$,
where $\mu,\,\nu\geq 0$ are integers
\item[$\bullet$] $\beta=m-1-\alpha$ or $\nu=\mu=0$
\item[$\bullet$] $H_1$ is a string of
length $m-2-\beta+\mu$
with exactly $\mu$ $1$'s and exactly $m-2-\beta$ $0$'s
\item[$\bullet$] $H_0$ is a string of length $m-1+\beta-\mu$ containing exactly $m-1-\alpha-\mu$ $1$'s and exactly $\alpha+\beta$ $0$'s
\end{enumerate}

\medskip

\item[(ii)]
\begin{enumerate}
\item[$\bullet$] either $\beta<m-1-\alpha$ or $\beta=m-1$
\item[$\bullet$]
$\Delta R=0^{m-\alpha-\beta-1}1H_21^{m-\beta}$
\item[$\bullet$]  $H_2$ is a string of length $m-2+\beta+\alpha$
\item[$\bullet$] if $\beta\leq m-2$, then $|\Delta^{-1}(0)\cap R|\geq m$
\item[$\bullet$] if $\alpha>0$, then $\beta\geq 1$ and $H_2$ contains exactly $\beta-1$ $1$'s
\end{enumerate}

\medskip

\item[(iii)]
\begin{enumerate}
\item[$\bullet$] $\beta\geq \alpha$
\item[$\bullet$] $|\Delta^{-1}(0)\cap R|<m$
\item[$\bullet$]
$\first_m(1,R)\leq 3m-3-\beta-\alpha$
\item[$\bullet$] $|\Delta^{-1}(0)\cap [\first_1(1,R),\first_m(1,R)]|\leq \beta$
\end{enumerate}
\end{itemize}
\end{lem}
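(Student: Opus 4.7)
My plan is to turn the two extremality conditions into rigid structural constraints on $\Delta R$ and then case-split on $z := |\Delta^{-1}(0) \cap R|$. From (a) (minimality of $\max B_1$), no monochromatic $m$-subset of $[1, 3m-3-\beta]$ has diameter $\geq 2m-2$; consequently, for each color $c \in \{0,1\}$, the $c$-positions in $[1, 3m-3-\beta]$ either number fewer than $m$ or else fit in a window of diameter $\leq 2m-3$. From (b) (minimality of $\text{diam}(B_1)$), $\min B_1 = m-\beta-\alpha$ is the largest admissible minimum of any valid $B_1'$-candidate with $\max = 3m-2-\beta$; when $\alpha \geq 1$ this essentially pins down $B_1 \setminus \{\min B_1\}$ as the entire population of $1$s in $(m-\beta-\alpha, 3m-2-\beta]$, forcing the total zero-count in $R$ to be $m-1+\alpha$.

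\emph{Regime $z < m$ (targeting (iii)).} The ``$|\Delta^{-1}(0) \cap R| \geq m$'' hypothesis of (i)/(ii) fails. Here $|\Delta^{-1}(1) \cap R| > 2m-2-\beta$, so (for $\beta \leq m-3$) at least $m$ ones lie in $[1, 3m-3-\beta]$ and by (a) fit inside a window of diameter $\leq 2m-3$, giving $\first_m(1, R) - \first_1(1, R) \leq 2m-3$. Since at most $z < m$ zeros intersperse the first $m$ ones, counting delivers $|\Delta^{-1}(0) \cap [\first_1(1, R), \first_m(1, R)]| \leq \beta$. Using that the top $m$ ones begin at $m-\beta-\alpha$ (supplying at least $\alpha$ extra ones above $3m-3-\beta-\alpha$), one obtains $\first_m(1, R) \leq 3m-3-\beta-\alpha$ and, by a parallel balancing argument, $\beta \geq \alpha$.

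\emph{Regime $z \geq m$ (targeting (i)/(ii)).} Because $\Delta(3m-2-\beta) = 1$, all zeros lie in $[1, 3m-3-\beta]$ and by (a) fit in an interval $I$ of diameter $\leq 2m-3$; thus $\Delta R = 1^a W 1^b$ with $a := \first(0, R) - 1 \geq 0$, $b \geq 1$, and $W$ bordered by zeros. If $a = 0$, I aim at (ii): the leading zero-block has length $m-\alpha-\beta-1$ (since the first $1$ coincides with $\min B_1 = m-\beta-\alpha$), and the tail $1$-block has length exactly $m-\beta$, since any $0$ in $[2m-1, 3m-3-\beta]$ would combine with the prefix zeros to produce a forbidden color-$0$ $m$-set of diameter $\geq 2m-2$ in $[1, 3m-3-\beta]$; when $\alpha > 0$, (b) pushes the remaining zeros leftward inside $H_2$, leaving exactly $\beta - 1$ ones there. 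If $a \geq 1$, I aim at (i): identify the distinguished zero separating $H_0$ from $H_1$ as the first zero at position $> m-\beta-\alpha$, set $\nu := b - 1$ and $\mu$ to be the number of ones strictly between this distinguished zero and the terminal block, and derive the prescribed block lengths and counts by balancing the total length $3m-2-\beta$, the total zero count $m-1+\alpha$, and the prefix length $a = m-1-\beta-\nu$. The conditional ``$\beta = m-1-\alpha$ or $\nu = \mu = 0$'' records that $\nu, \mu > 0$ forces $\min B_1 = 1$, i.e., $\alpha + \beta = m-1$.

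\emph{Main obstacle.} The central difficulty will be the intricate bookkeeping needed to match the prescribed counts in $H_0, H_1, H_2$ exactly, together with handling the degenerate boundaries $\alpha = 0$, $\alpha + \beta = m-1$, and $\beta \in \{0, m-1\}$, in which several sub-blocks disappear or coalesce. A subtler step is cleanly separating when the zero-interval $I$ must abut position $1$ (yielding (ii)) from when it must be strictly interior (yielding (i)), which has to be argued from (a), (b), and the fact that $\Delta(B_1) = \{1\}$ rather than from any a priori structural assumption.
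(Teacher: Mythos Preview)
Your high-level decomposition matches the paper's: first split on $z=|\Delta^{-1}(0)\cap R|$, then in the regime $z\geq m$ split on whether the coloring begins with $0$ (targeting (ii)) or with $1$ (targeting (i)). The paper phrases the second split in terms of $\eta=3m-3-\beta-\last_2(1,R)$ together with the dichotomy $\beta=m-1-\alpha$ versus $\beta<m-1-\alpha$, but these are equivalent once one checks that $\Delta(1)=0$ forces $\eta=0$ when $z\ge m$.

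However, your plan for (i) contains a concrete error. The distinguished zero separating $H_0$ from $H_1$ is \emph{not} ``the first zero at position $> m-\beta-\alpha$'': in the target decomposition $1^{m-1-\beta-\nu}H_0\,0\,H_1\,1^{1+\nu}$ the string $H_0$ already carries all $\alpha+\beta$ of the early zeros, so the first zero after $\min B_1$ typically lands inside $H_0$; with your choice $H_0$ would be zero-free, forcing $\alpha=\beta=0$. The correct separator is $\last_{m-1-\beta}(0,R)$, i.e.\ the $(\alpha+\beta+1)$-th zero from the left. Several further steps are also more delicate than you indicate. The bound $\beta\geq\alpha$ in (iii) is not a plain count: one first uses extremality (b) to force $\Delta([m-\alpha-\beta+1,m-\beta])=\{0\}$ and extremality (a) to force $\Delta([1,m-1-\beta-\eta])\subseteq\{0\}$, and only then tallies three disjoint zero-blocks against $z<m$. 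Establishing the prefix $\Delta([1,m-1-\beta])=\{1\}$ needed for (i) in your $a\geq 1$ branch requires a separate color-$0$ test set combined with (a); your assertion that (b) alone pins the zero-count at $m-1+\alpha$ glosses over the need to first exclude $|\Delta^{-1}(1)\cap[m-\alpha-\beta,3m-2-\beta]|>m$ via a specific test set. Finally, the $z<m$ regime does not always yield (iii): when $\beta=m-1$ one has $\alpha=0$ and (ii) holds trivially, a boundary case your plan omits.
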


\begin{proof}
 Note (a) and (b) imply \be\label{minB1}\min B_1=\max B_1-\diam B_1=3m-2-\beta-(2m-2+\alpha)=m-\alpha-\beta.\ee
Let $$\eta=3m-3-\beta-\last_2(1,R)\geq 0$$ be
the number of integers colored by $0$ between $\last_1(1,R)$ and $\last_2(1,R)$. Let $$\nu=3m-3-\beta-\last(0,R)\geq 0$$
be the number of integers strictly between $3m-2-\beta$ and
$\last(0,R)$ that are colored by $1$.
We continue with three claims.

\subsection*{Claim A} If $|\Delta^{-1}(1)\cap R|>m$, then $\last_2(1,R)\leq 3m-3-\beta-\alpha$ and $\eta\geq \alpha$.

If we have $\last_2(1,R)\geq 2m-2+\first(1,R)$,
then
$B'_1=\first_1^{m-1}(1,R)\cup \{\last_2(1,R)\}$ will be a monochromatic $m$-subset, in view of the hypothesis $|\Delta^{-1}(1)\cap R|>m$, with $\max B'_1<\max B_1$ and $\diam B'_1=\last_2(1,R)-\first(1,R)\geq 2m-2$, contradicting the maximality condition (a) for $B_1$. Therefore we may instead assume $\last_2(1,R)\leq 2m-3+\first(1,R)\leq 3m-3-\beta-\alpha,$ where the final inequality follows from $\first(1,R)\leq \min B_1$ and \eqref{minB1}, and now $\eta\geq \alpha$ follows from the definition of $\eta$, completing the claim.

\subsection*{Claim B} If $|\Delta^{-1}(0)\cap R|<m$, then either $\beta=m-1$ and (ii) holds or else (iii) holds.

If $\beta=m-1$, then $\alpha=0$ in view of $\alpha\in [0,m-1-\beta]$. Moreover, $\min B_1=1$ by \eqref{minB1}, and now (ii) is easily seen to hold. Therefore we may assume $\beta\leq m-2$, in which case \be\label{enough1s} |\Delta^{-1}(1)\cap R|=3m-2-\beta-|\Delta^{-1}(0)\cap R|\geq 2m-|\Delta^{-1}(0)\cap R|\geq m+1,\ee where we have utilized the claim hypothesis for the final inequality. Thus Claim A implies \be\label{wolf1}\last_2(1,R)\leq 3m-3-\beta-\alpha\quad\und\quad \eta\geq \alpha.\ee Moreover, since $\first_m(1,R)\leq \last_2(1,R)$ by \eqref{enough1s}, we see that \eqref{wolf1} also implies $$\first_m(1,R)\leq 3m-3-\beta-\alpha.$$
By the hypothesis of the claim, we have \be\label{waterspout}|\Delta^{-1}(1)\cap [m-\alpha-\beta,3m-2-\beta]|\geq 2m-1+\alpha-|\Delta^{-1}(0)\cap R|\geq m+\alpha.\ee
We must have \be\label{morew} \Delta([1,m-1-\beta-\eta])\subseteq \{0\},\ee for if $\first(1,R)\leq m-1-\beta-\eta$, then $B'_1=\first_1^{m-1}(1,R)\cup \{\last_2(1,R)\}$ will be a monochromatic $m$-subset in view of \eqref{enough1s} with $\max B'_1<\max B_1$ and $\diam B'_1\geq \last_2(1,R)-(m-1-\beta-\eta)=2m-2$ (in view of the definition of $\eta$), contradicting the extremal condition (a) for $B_1$.

From \eqref{morew}, we see there are at least $m-1-\beta-\eta$ integers colored by $0$ less than $\first(1,R)$ (with this estimate being rather trivial when $[1,m-1-\beta-\eta]=\emptyset$). By the definition of $\eta$, we have  at least $\eta$ integers colored by $0$ all greater than $\last_2(1,R)$. In particular, since
\eqref{enough1s} implies $\first_m(1,R)\leq \last_2(1,R)$, we find that there are at least $\eta$ integers colored by $0$ greater than $\first_m(1,R)$.
 Since $|\Delta^{-1}(0)\cap R|<m$ holds by the claim hypothesis, this leaves at most $m-1-\eta-(m-1-\beta-\eta)=\beta$ integers that can be colored by $0$ between $\first(1,R)$ and $\first_m(1,R)$, i.e.,
$$|\Delta^{-1}(0)\cap [\first_1(1,R),\first_m(1,R)]|\leq \beta.$$

It now remains to show
$$\beta\geq \alpha$$ and then (iii) will follow, completing the claim. If $\alpha=0$, then this holds trivially, so we now assume $\alpha\geq1,$ in which case  \eqref{waterspout} implies  there are at least $m+1$ integers colored by $1$ in the interval $[m-\alpha-\beta,3m-2-\beta]$.  Recall from \eqref{minB1} that  $\min B_1=m-\alpha-\beta$. Then we must have \be\label{miy}\Delta([m-\alpha-\beta+1,m-\beta])=\{0\},\ee for otherwise $$B'_1=\first_2^{m}(1, [m-\alpha-\beta,3m-2-\beta])\cup \{3m-2-\beta\}$$ will be a monochromatic $m$-subset with $\diam B_1>\diam B'_1\geq 3m-2-\beta-m+\beta=2m-2$, contradicting the extremal condition (b) for $B_1$. But now we have at least $m-1-\beta-\eta$ integers colored by $0$ in $[1,m-1-\beta-\eta]$ by \eqref{morew}, at least  $\alpha$ integers colored by $0$ in $[m-\alpha-\beta+1,m-\beta]$ by \eqref{miy}, and $\eta$ integers colored by $0$ in $[3m-2-\beta-\eta,3m-3-\beta]$ by the definition of $\eta$. Since
$$m-1-\beta-\eta<m-\alpha-\beta+1\leq m-\beta<3m-2-\beta-\eta,$$ where the first inequality follows in view \eqref{wolf1}, the second from the assumption $\alpha\geq 1$ (noted at the start of the paragraph), and the third from the trivial inequality $\eta\leq |\Delta^{-1}(0)\cap R|$ combined with the claim's hypothesis $|\Delta^{-1}(0)\cap R|\leq m-1$, it follows that these three intervals are all disjoint.
Thus $$|\Delta^{-1}(0)\cap R|\geq (m-1-\beta-\eta)+\alpha+\eta=m-1-\beta+\alpha.$$ But combining the above inequality with the claim hypothesis $|\Delta^{-1}(0)\cap R|\leq m-1$ now yields $\beta\geq \alpha$, completing the claim as remarked previously.

\subsection*{Claim C} If $\eta>0$ and $|\Delta^{-1}(0)\cap R|\geq m$, then  \be\label{ladenh}\Delta([1,m-1-\beta])\subseteq \{1\}.\ee Moreover, if we also have  $\beta\leq m-2$ and $|\Delta^{-1}(1)\cap R|> m$, then $\eta\geq m-1-\beta$.

From the first hypothesis $\eta>0$ and the definition of $\eta$, we have  $\Delta(3m-3-\beta)=0$. Thus from the second hypothesis  $|\Delta^{-1}(0)\cap R|\geq m$, we conclude that  \eqref{ladenh} holds,  for otherwise $B'_1=\first_1^{m-1}(0,R)\cup \{3m-3-\beta\}$ will be a monochromatic $m$-subset with $\max B'_1<\max B_1$ and $\diam B'_1\geq 3m-3-\beta-(m-1-\beta)=2m-2$, contradicting the extremal condition (a) for $B_1$.
This completes the first part of the claim, and we now assume the hypotheses of the second part. In view of \eqref{ladenh} and the third hypothesis  $\beta\leq m-2$, it follows that $\Delta(1)=1$, and now we must have $$\Delta([2m-1,3m-3-\beta])=\{0\},$$ for otherwise $B'_1=\first_1^{m-1}(1,R)\cup \{\last_2(1,R)\}$ will be a monochromatic $m$-subset, in view of the fourth hypothesis $|\Delta^{-1}(1)\cap R|>m$, with $\max B'_1<\max B_1$ and $\diam B'_1\geq (2m-1)-1=2m-2$, contradicting the extremal condition (a) for $B_1$. The claim now follows in view of the definition of $\eta$.

\smallskip

In view of Claim B, we may assume \be\label{0aremany}|\Delta^{-1}(0)\cap R|\geq m,\ee else the proof is complete. We divide the remainder of the proof into two cases.

\subsection*{Case 1:}  $\beta=m-1-\alpha$. Note this is equivalent to  $\min B_1=1$ in view of \eqref{minB1}.

If $\alpha=0$, then $\beta=m-1$ and
(ii) follows. So assume $$\alpha\geq 1\quad\und \quad\beta\leq m-2,$$
where the latter inequality follows from the former in view of the case hypothesis. We will show (i) holds.

\smallskip

Suppose $|\Delta^{-1}(1)\cap R|=m$.
Then
$|\Delta^{-1}(0)\cap
R|=2m-2-\beta$. In view of \eqref{0aremany}, we see that, if $|\Delta^{-1}(1)\cap
[\first(0,R),\last(0,R)]|>\beta$, then $B'_1=\first_1^{m-1}(0,R)\cup\{\last_1(0,R)\}$ will be a monochromatic $m$-subset with $\diam(B'_1)\geq |\Delta^{-1}(0)\cap
R|-1+\beta+1= 2m-2$ and $\max B'_1<\max B_1$, contradicting the maximality condition (a) for $B_1$. Therefore we must instead have
$|\Delta^{-1}(1)\cap [\first(0,R),\last(0,R)]|\leq\beta$.
From the definition of $\nu$, there are precisely $\nu+1$ integers colored by $1$ to the right of $\last(0,R)=3m-3-\beta-\nu$ in $R$. Combined with the previous sentence, this means there are at most $\beta+\nu+1$ elements of $R$ colored by $1$ greater than $\first(0,R)$.  But now, since $|\Delta^{-1}(1)\cap R|=m$,  there must be   at least $m-1-\beta-\nu$ integers colored by $1$ to the left of $\first(0,R)$. Consequently, if $m-1-\beta-\nu\geq 1$, then
 (i)
follows by letting $H_1$ be the string given by
$\Delta[\last_{m-1-\beta}(0,R)+1,3m-3-\beta-\nu]$, letting $\mu$ be
the number of $1$'s in $H_1$, and noting that the case hypothesis gives $\beta=m-1-\alpha$ (so that $H_0$ containing exactly $m-1-\alpha-\mu$ $1$'s is equivalent to $|\Delta^{-1}(1)\cap R|=m$). On the other hand, if $m-1-\beta-\nu\leq  0$, then let $\nu'=m-2-\beta< \nu$. Since $\beta\leq m-2$, we also have $\nu'\geq 0$, while $m-1-\beta-\nu'=1$, which is colored by $1$ in view of $\min B_1=1$. Thus (i) follows using $\nu'$ in place of $\nu$ by letting $H_1$ be the string given by
$\Delta[\last_{m-1-\beta}(0,R)+1,3m-3-\beta-\nu']$, and by letting $\mu$ be
the number of $1$'s in $H_1$.
So we may now assume \be\label{1ton}|\Delta^{-1}(1)\cap
R|>m.\ee In particular, \eqref{1ton} and \eqref{0aremany} together force $3m-2-\beta=|R|\geq m+1+m$, implying $\beta\leq m-3$.

\smallskip

In view of \eqref{1ton} and Claim A, it follows that $\eta\geq \alpha>0$, which, together with \eqref{0aremany}, allows us to apply Claim C to conclude $\Delta([1,m-1-\beta])=\{1\}$. Thus, in view of $\beta\leq m-3$, we find that $\Delta(1)=\Delta(2)=1$, i.e., $\first_2(1,R)=2$.
As a result, $B'_1=\first_2^m(1,R)\cup \{3m-2-\beta\}$ is a monochromatic $m$-subset (in view of \eqref{1ton}) with $$\diam B'_1=3m-4-\beta=2m-3+\alpha\geq 2m-2,$$ where the second equality follows by the case hypothesis and the  inequality from the assumption $\alpha\geq 1$. Since $\max B'_1=\max B_1$ and $2m-2\leq \diam B'_1<\diam B_1=2m-2+\alpha$, this contradicts the extremal condition (b) for $B_1$, completing the case.

\subsection*{Case 2:} $\beta<m-1-\alpha$. Note this is equivalent to  $\min B_1>1$ by \eqref{minB1} and implies $$\beta\leq m-2$$ in view of $\alpha\geq 0$.
We divide this case into two subcases.

\subsection*{Subcase 2.1:} $\eta=0$, so that $\last_2(1,R)=3m-3-\beta$.

We will show (ii) holds.
From the subcase hypothesis and  Claim A, it follows that \be\nn\alpha=0\quad\mbox{ or }\quad
|\Delta^{-1}(1)\cap R|=m.\ee From \eqref{minB1}, we know there are $m$ integers in $R$ colored by $1$ all at least $m-\alpha-\beta$, namely, the $m$ integers from $B_1$.
Thus, if $\first(1,R)<m-\alpha-\beta$, then the set $B'_1=
\first_1^{m-1}(1,R)\cup \{\last_2(1,R)\}$ will be a monochromatic $m$-subset with
 $\diam B'_1\geq 3m-3-\beta-(m-\alpha-\beta-1)=2m-2+\alpha\geq 2m-2$ and $\max B'_1<\max B_1$, contradicting the extremal condition (a) for $B_1$.
Therefore we may instead assume $\first(1,R)=\min B_1= m-\alpha-\beta$ (in view of \eqref{minB1}), so that \be\label{otheror}\Delta([1,m-\alpha-\beta-1])=\{0\}\quad\und\quad \Delta(m-\alpha-\beta)=1.\ee
Now $[1,m-\alpha-\beta-1]$ is a nonempty interval (in view of the hypothesis of Case 2) entirely colored by $0$. Consequently,  in view of \eqref{0aremany}, it follows that the extremal condition (a) for $B_1$ will be contradicted by $\first_1^{m-1}(0,R)\cup \{\last(0,R)\}$ unless $\Delta([2m-1,3m-2-\beta])=\{1\}$. However, in this latter case,
(ii) follows (note $H_2$ containing exactly $\beta-1$ $1$'s is equivalent to $|\Delta^{-1}(1)\cap R|=m$, and that this in turn forces $\beta-1\geq 0$), completing the subcase.

\subsection*{Subcase 2.2:} $\eta>0$

We will show (i) holds with $\nu=\mu=0$ in this case. In view of the subcase hypothesis  $\eta>0$ and \eqref{0aremany}, we may apply Claim C to conclude that \be\label{foodil}\Delta([1,m-1-\beta])=\{1\}.\ee
In particular, in view of $\beta\leq m-2$, we see that $\Delta(1)=1$. Thus, we must have
 \be\label{stihy}|\Delta^{-1}(1)\cap R|>m,\ee for otherwise $\min B_1=1$ by \eqref{minB1}, contrary to the hypothesis of Case 2. But now we can further apply Claim C to conclude $$\eta\geq m-1-\beta
\quad\und\quad \Delta([2m-1,3m-3-\beta])=\{0\},$$ where the second statement above is simply a restatement of the first in view of the definition of $\eta$.

If $\alpha=0$, then $\min B_1=m-\beta$ by \eqref{minB1}, in which case  \eqref{foodil} shows that all integers colored by $0$ in $R$ lie between $\min B_1$ and $\max B_1=3m-2-\beta$. Thus, in view of \eqref{0aremany}, it follows that $\diam B_1\geq |B_1|-1\geq 2m-1$, contradicting that $\diam B_1=2m-2+\alpha=2m-2$ by (b). Therefore we instead conclude that $\alpha\geq 1$.

If $\alpha=1$, then $\min B_1=m-\alpha-\beta=m-1-\beta$ by \eqref{minB1}. Thus \eqref{foodil} shows that all integers colored by $0$ in $R$ lie between $\min B_1$ and $\max B_1=3m-2-\beta$. Hence $2m-1=2m-2+\alpha=\diam B_1\geq m-1+|\Delta^{-1}(0)\cap R|$, which combined with \eqref{0aremany} forces there to be exactly $|\Delta^{-1}(0)\cap R|=m$ integers colored by $0$ in $[\min B_1+1,\max B_1-1]=[m-\beta,3m-3-\beta]$, and thus exactly $m-2=m-1-\alpha$ integers colored by $1$ in $[m-\beta,3m-3-\beta]$. As a result, (i) follows with $\mu=\nu=0$. Therefore we may now assume $\alpha\geq 2$.

Since $\alpha\geq 2$, \eqref{foodil} implies that $\Delta(m-\alpha-\beta+1)=1$. As a result, recalling from \eqref{minB1} that  $\min B_1=m-\alpha-\beta$,  we must have $$|\Delta^{-1}(1)\cap [m-\alpha-\beta,3m-2-\beta]|=m,$$ for otherwise  $B'_1=\first_2^m(1,[m-\alpha-\beta,3m-2-\beta])\cup \{3m-2-\beta\}$ will be a monochromatic $m$-subset with $\max B'_1=\max B_1$ and $\diam B'_1=\diam B_1-1=2m-3+\alpha\geq 2m-2$, contradicting the extremal condition (b) for $B_1$.
  But now (i) follows with $\nu=\mu=0$, completing the proof.
\end{proof}

The next lemma translates the structural information from Lemma
2.1 into the existence of sets with small
diameter.

\begin{lem}\label{lem2.2}
Let $m\geq 2$ and let $\Delta: [1,3m-2]\rightarrow\{0,1\}$ be a $2$-coloring.

\begin{itemize}
\item[(i)] If there does not exist a monochromatic $m$-subset $B$ such that
${\diam}(B)\geq 2m-2$, then there exist monochromatic $m$-subsets
$D_1,\,D_2\subseteq [1,3m-2]$ such that $$D_1<_p D_2\quad\und\quad{\diam}(D_1)={\diam}(D_2)=m-1.$$

\item[(ii)] Otherwise, if $\beta$, $\alpha$, $\nu$, $\mu$, and $B_1$ are as
defined in Lemma 2.1, then the following hold.\begin{itemize}

\item[(a)] There exist monochromatic $m$-subsets
$A_1,\,A_2\subseteq [1,3m-2-\alpha-\beta]$ with \ber\nn&&\label{eq1}{\diam}(A_1)\leq
2m-2-\alpha\quad\und\quad\quad{\diam}(A_2)\leq
m+\lfloor\frac{m-1+\beta}{2}\rfloor-1.\eer

\item[(b)] If either Lemma \ref{lem2.1}(iii) holds, or $\alpha\geq 1$ and Lemma \ref{lem2.1}(ii) holds, then $$\diam A_1\leq m-1+\beta.$$
\item[(c)] If Lemma \ref{lem2.1}(i) holds, then  $$\diam A_1\leq 2m-2-\alpha-\mu$$ and there exists a monochromatic $m$-subset $A_3\subseteq [1,m+\alpha+\beta]\subseteq [1,3m-2-\alpha-\beta]$ with $$\diam A_3\leq m+\alpha+\beta-1.$$
\end{itemize}
\end{itemize}
\end{lem}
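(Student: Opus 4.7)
For part (i), I will prove the stronger conclusion that $[1,m]$ and $[2m-1,3m-2]$ are each monochromatic of opposite colors; taking $D_1 = [1,m]$ and $D_2 = [2m-1,3m-2]$ then suffices. The argument proceeds in three steps. First, both colors must occur at least $m$ times in $[1,3m-2]$: if color $c$ appeared fewer than $m$ times, color $c'$ would occupy $j \geq 2m-1$ positions $x_1 < \cdots < x_j$ with $x_j - x_1 \geq j-1 \geq 2m-2$, and $\{x_1, x_{j-m+2},\ldots, x_j\}$ would be a forbidden monochromatic $m$-set. Assuming WLOG $\Delta(1) = 0$, the hypothesis forces $\Delta(3m-2) = 1$, else combining position $1$ with the last $m-1$ zeros yields a $0$-colored $m$-set of diameter $3m-3$. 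Finally, if some $p \in [1,m]$ had $\Delta(p) = 1$, the diameter hypothesis applied to the last $m$ ones (which include position $3m-2$) forces those ones all into $[m+1,3m-2]$, so combining $p$ with the last $m-1$ ones gives a $1$-colored $m$-set of diameter at least $2m-2$, a contradiction. Hence $[1,m]$ is entirely $0$, and symmetrically $[2m-1,3m-2]$ is entirely $1$.

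For part (ii), I case on which of Lemma~\ref{lem2.1}(i), (ii), (iii) holds, constructing $A_1$ directly from the structural description of $\Delta R$. The bound for $A_2$ in (a) is then obtained by taking $A_2 = A_1$ whenever $\diam A_1 \leq m-1+\beta$, since the inequality $m-1+\beta \leq m + \lfloor (m-1+\beta)/2\rfloor - 1$ reduces to $\beta \leq m-1$. In case~(iii), set $A_1 = \first_1^m(1, R)$: the estimates $|\Delta^{-1}(0)\cap[\first_1(1,R),\first_m(1,R)]| \leq \beta$ and $\first_m(1,R) \leq 3m-3-\beta-\alpha$ yield $\diam A_1 \leq m-1+\beta$ (proving (b)) and $A_1 \subseteq [1,3m-2-\alpha-\beta]$. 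In case~(ii) with $\alpha = 0$, take $A_1 = B_1$, giving $\diam A_1 = 2m-2 = 2m-2-\alpha$. In case~(ii) with $\alpha \geq 1$, set $A_1 = \first_1^m(0, R)$: since $H_2$ contains exactly $\beta-1$ ones and the prefix $0^{m-\alpha-\beta-1}$ supplies $m-\alpha-\beta-1$ initial zeros, a worst-case placement of the $\beta-1$ ones of $H_2$ at the start of $H_2$ pushes $\first_m(0, R) \leq m+\beta$, so $\diam A_1 \leq m-1+\beta$.

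In case~(i), the constraint ``$\beta = m-1-\alpha$ or $\nu=\mu=0$'' together with the non-negativity of $|H_1| = m-2-\beta+\mu$ forces $\mu = 0$ whenever $\alpha = 0$, so $A_1 = B_1$ achieves $\diam A_1 = 2m-2 = 2m-2-\alpha-\mu$. For $\alpha \geq 1$, set $A_1 = \first_1^m(0, R)$; since $H_0$ contributes only $\alpha+\beta \leq m-1$ zeros, the $m$-th zero lies either at the distinguished zero between $H_0$ and $H_1$ (when $\alpha+\beta = m-1$) or within $H_1$ (when $\alpha+\beta \leq m-2$, in which case the structural condition forces $\nu = \mu = 0$). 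A worst-case placement of the ones of $H_0$ and $H_1$ as early as possible in their respective blocks yields $\diam A_1 \leq 2m-2-\alpha-\mu$ in each subcase. For $A_3$: a direct count shows that $[1, m+\alpha+\beta]$ contains at least $m$ points of some color. When $\alpha+\beta+\nu+\mu \leq m-2$, the initial prefix $1^{m-1-\beta-\nu}$ together with the ones forced into a prefix of $H_0$ (since $H_0$ has only $\alpha+\beta$ zeros) supply at least $(m-1-\beta-\nu)+(\beta+\nu+1) = m$ ones; when $\alpha+\beta+\nu+\mu \geq m-1$, the structural condition forces $\alpha+\beta = m-1$, and then $[1, m+\alpha+\beta] = [1, 2m-1]$ contains the $m-1$ zeros of $H_0$ together with the distinguished zero, giving at least $m$ zeros. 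Taking the first $m$ majority-color points as $A_3$ yields $\diam A_3 \leq m+\alpha+\beta-1$ trivially from the containment.

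For $A_2$ in the remaining cases of (a), namely case~(ii) with $\alpha = 0$ and $\beta < m-1$ and the $\alpha = 0$ branch of case~(i), $A_1 = B_1$ has diameter $2m-2$ exceeding the $A_2$-bound, so $A_2$ must be constructed separately. It is built either as $\first_1^m(0, R)$ when the prefix $0^{m-\alpha-\beta-1}$ and zeros in $H_0$ or $H_2$ are abundant enough, or as a tight window of ones drawn from the initial block $1^{m-1-\beta-\nu}$ combined with early ones of $H_0$ (in case~(i)) or from the terminal block $1^{m-\beta}$ (in case~(ii)), in each instance achieving diameter at most $m + \lfloor (m-1+\beta)/2\rfloor - 1$. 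The principal technical difficulty is the worst-case diameter computation for $A_1$ in case~(i) with $\alpha \geq 1$, where each subcase on the location of the $m$-th zero requires its own careful accounting of which ones of $H_0$ and $H_1$ can be pushed before the $m$-th zero.
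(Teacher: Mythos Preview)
Your approach matches the paper's: case on which of Lemma~\ref{lem2.1}(i)--(iii) holds and build $A_1,A_2,A_3$ explicitly from the structural description of $\Delta R$. Part~(i) and the constructions of $A_1$ and $A_3$ are correct and essentially identical to the paper's (the paper uses $A_1=\first_1^m(0,\cdot)$ uniformly in case~(i) rather than switching to $A_1=B_1$ when $\alpha=0$, but your variant works too).

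There is, however, a gap in your treatment of $A_2$. Your rule ``take $A_2=A_1$ whenever $\diam A_1\le m-1+\beta$'' together with your enumerated ``remaining cases'' (case~(ii) with $\alpha=0,\ \beta<m-1$, and case~(i) with $\alpha=0$) does not cover case~(i) with $\alpha\ge 1$ and $\alpha+\beta<m-1$. In that regime $\mu=\nu=0$, your $A_1$ satisfies only $\diam A_1\le 2m-2-\alpha$, and for small $\alpha$ (e.g.\ $m=10$, $\alpha=1$, $\beta=0$) this exceeds $m+\lfloor\frac{m-1+\beta}{2}\rfloor-1$, so $A_2=A_1$ fails and no fallback is given. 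The fix is immediate from what you already have: since $\diam A_1+\diam A_3\le (2m-2-\alpha)+(m-1+\alpha+\beta)=3m-3+\beta$, the smaller of $A_1$ and $A_3$ has diameter at most $\lfloor\frac{3m-3+\beta}{2}\rfloor=m+\lfloor\frac{m-1+\beta}{2}\rfloor-1$; this is exactly the paper's argument. The same averaging also cleanly handles your $\alpha=0$ branch of case~(i), replacing the vaguer ``tight window of ones'' sketch.

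Separately, your sketch for $A_2$ in case~(ii) with $\alpha=0$ and $\beta<m-1$ asserts without proof that one of $\first_1^m(0,R)$ or $\last_1^m(1,R)$ achieves the required bound. This is true but not immediate: the paper proves it by a short counting contradiction, bounding the number of $0$'s and $1$'s in the overlapping interval $[2m-\lfloor\frac{m-1+\beta}{2}\rfloor-1-\beta,\ m+\lfloor\frac{m-1+\beta}{2}\rfloor]$ and showing the two bounds are incompatible. You should supply that count.
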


\begin{proof}
First suppose that there does not exist a monochromatic $m$-subset
$B$ with ${\diam}(B)\geq 2m-2$. We may assume each color is
used at least $m$ times, for otherwise w.l.o.g.
$D=\first_1^{m-1}(1,[1,3m-2])\cup \{\last(1,[1,3m-2])\}$ is a
monochromatic $m$-set with ${\diam}(D)\geq |D|-1\geq 2m-2$, contrary to hypothesis.
We may w.l.o.g. assume $\Delta(1)=1$. Since there does not exist a
monochromatic $m$-subset $B$ with ${\diam}(B)\geq 2m-2$, and since
each color is used at least $m$ times, it follows that
$\last(1,[1,3m-2])\leq 2m-2$, \ $\Delta(3m-2)=0$ and $\first(0, [1,3m-2])\geq
m+1$. Hence $D_1=[1,m]$ and $D_2=[2m-1,3m-2]$ satisfy (i).

So we may now assume there exists a monochromatic $m$-subset $B$ with
${\diam}(B)\geq 2m-2$. Let $\beta$, $\alpha$, $\nu$, $\mu$, and $B_1$
be as defined in Lemma 2.1. Let $R=[1,3m-2-\beta]$ and assume
w.l.o.g. $\Delta(B_1)=\{1\}$.
 Notice that $\beta\leq m-1$ implies
$\beta\leq \lfloor\frac{m-1+\beta}{2}\rfloor$, so that
$$m-1+\beta\leq m+\lfloor\frac{m-1+\beta}{2}\rfloor-1.$$
Applying Lemma 2.1 to $[1, 3m-2]$ yields
three cases.

\subsection*{Case 1:} Lemma \ref{lem2.1}(i) holds.

Suppose $\beta=m-1-\alpha$. Then $H_0$ contains exactly $m-1-\alpha-\mu$ $1$'s and exactly $\alpha+\beta=m-1$ $0$'s.  Thus the string $H_00$ contains $m$ $0$'s and exactly $m-1-\alpha-\mu=\beta-\mu$ $1$'s, in which case   $A_1=A_2=A_3=\first_1^m(0,R)\subseteq [1,2m-1-\mu-\nu]\subseteq [1,m+\alpha+\beta]= [1,3m-2-\alpha-\beta]$ is a monochromatic $m$-subset with $\diam A_1\leq 2m-2-\alpha-\mu=m-1+\beta-\mu$, as desired. So we may now assume $\beta<m-1-\alpha$, in which case \be\label{workit}\mu=\nu=0 \quad\und\quad \Delta[1,3m-2-\alpha-\beta]=1^{m-1-\beta}H_00^{m-\alpha-\beta}.\ee

Since $H_0$ contains exactly $m-1-\alpha$ $1$'s and exactly $\alpha+\beta$ $0$'s, it follows from \eqref{workit} that $A_1=\first_1^m(0,[1,3m-2-\alpha-\beta])$ is a monochromatic $m$-subset such that  $\diam A_1\leq m-1+(m-1-\alpha)=2m-2-\alpha$. Moreover, since $m-1-\beta+m-1-\alpha=2m-2-\alpha-\beta\geq m$ in view of $\beta<m-\alpha-1$, it follows that there are at least $m$ $1$'s in the string $1^{m-1-\beta}H_0$. Consequently, since there are at most $\alpha+\beta$ $0$'s in $H_0$, it follows that $A_3=\first_1^m(1,[1,m+\alpha+\beta])$ is a monochromatic $m$-subset with
 $\diam A_3\leq m+\alpha+\beta-1$ and  $A_3\subseteq [1,m+\alpha+\beta]$. Since $\frac12(\diam A_1+\diam A_3)\leq \frac12(3m-3+\beta)=m+\frac{m-1+\beta}{2}-1$, we can take $A_2$ to be the set from $A_1$ or $A_3$ having smaller diameter, and then $\diam A_2\leq m+\lfloor\frac{m-1+\beta}{2}\rfloor-1$, completing the case.

%
%

\subsection*{Case 2:}  Lemma 2.1(ii) holds.

If $\alpha>0$, then $\beta\leq m-1-\alpha\leq m-2$, and letting $A_1=A_2=\first_1^m(0,R)\subseteq [1,m+\beta]\subseteq [1,3m-2-\alpha-\beta]$, we find that $\diam A_1\leq m-1+\beta\leq 2m-2-\alpha$, as desired. It remain to consider when $\alpha=0$.

Taking $A_1=B_1\subseteq [1,3m-2-\beta]=[1,3m-2-\alpha-\beta]$ gives a monochromatic $m$-subset with $\diam A_1=\diam B_1=2m-2+\alpha=2m-2-\alpha$. If $\beta=m-1$, then $m+\lfloor\frac{m-1+\beta}{2}\rfloor-1=2m-2$, and  we may take $A_2=A_1$.
Therefore assume $\beta<m-1$.
Assume by contradiction that there is no monochromatic $m$-subset
 $A_2\subseteq [1,3m-2-\alpha-\beta]=R$ with $\diam A_2\leq m+\lfloor\frac{m-1+\beta}{2}\rfloor-1$. Since
 $\Delta([2m-1,3m-2-\beta])=\{1\}$, this implies
 \be\label{sly1}|\Delta^{-1}(1)\cap [2m-\lfloor\frac{m-1+\beta}{2}\rfloor-1-\beta, 2m-2]|\leq \beta-1,\ee
 else $A_2=\last_1^m(1,R)$ will be such a set.
 On the other hand, since $\Delta([1,m-1-\beta])=\{0\}$ and $|\Delta^{-1}(0)\cap R|\geq m$, it likewise follows that
 \be\label{sly2}|\Delta^{-1}(0)\cap [m-\beta,m+\lfloor\frac{m-1+\beta}{2}\rfloor]|\leq \beta,\ee else $A_2=\first_1^m(0,R)$ will be such a set.
 Observe that \ber\nn &&2m-\lfloor\frac{m-1+\beta}{2}\rfloor-1-\beta\geq 2m-\lfloor\frac{m-1+m-2}{2}\rfloor-1-\beta=m-\beta+1\quad \und\\ \nn &&m+\lfloor\frac{m-1+\beta}{2}\rfloor\leq m+\lfloor\frac{m-1+m-2}{2}\rfloor =2m-2,\eer both in view of $\beta\leq m-2$. Thus, letting $R'=[2m-\lfloor\frac{m-1+\beta}{2}\rfloor-1-\beta, m+\lfloor\frac{m-1+\beta}{2}\rfloor]$, we see that \eqref{sly1} and \eqref{sly2} yield the contradiction
 $$2\beta\leq 2\lfloor\frac{m-1+\beta}{2}\rfloor-m+2+\beta=|R'|\leq 2\beta-1,$$ completing the case.

\subsection*{Case 3:} Lemma 2.1(iii) holds.

Letting
$\first_1^m(1,[1,3m-2-\beta-\alpha])=A_1=A_2$, it follows that
$\diam (A_1)\leq m-1+\beta\leq 2m-2-\alpha$, completing the proof.
\end{proof}

We are now ready to proceed with our main result.

\begin{thm}Let $m\geq 2$ be an integer. Then $$f(m,m,m;2)= 8m-5+\lfloor\frac{2m-2}{3}\rfloor+\delta,$$ where $\delta=1$ if $m\in \{2,5\}$ and $\delta=0$ otherwise.
\end{thm}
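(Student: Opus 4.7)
The plan is to prove both $f(m,m,m;2)\leq N$ and $f(m,m,m;2)\geq N$, where $N=8m-5+\lfloor(2m-2)/3\rfloor+\delta$.

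For the upper bound, suppose for contradiction that $\Delta:[1,N]\to\{0,1\}$ admits no solution. Apply Lemma \ref{lem2.2} to the prefix $[1,3m-2]$: either case (i) of that lemma produces disjoint monochromatic $m$-sets $D_1<_p D_2$ of diameter $m-1$, or case (ii) produces candidates $A_1,A_2\subseteq[1,3m-2-\alpha-\beta]$ with diameters controlled by parts (a)--(c) and parameters $\alpha,\beta$ from Lemma \ref{lem2.1}. Symmetrically, applying Lemma \ref{lem2.2} to the reverse of the suffix $[N-3m+3,N]$ yields analogous candidates $A'_1,A'_2$ (with parameters $\alpha',\beta'$) that will furnish $B_3$. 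The interval strictly between the chosen left and right candidates has length at least $N-(3m-2-\alpha-\beta)-(3m-2-\alpha'-\beta')=2m-1+\lfloor(2m-2)/3\rfloor+\delta+\alpha+\beta+\alpha'+\beta'$. The core of the argument is to show that in every configuration one can select $B_1\in\{A_1,A_2\}$ (or $\{D_1,D_2\}$), $B_3\in\{A'_1,A'_2\}$, and a monochromatic $m$-set $B_2$ from the middle satisfying $\diam B_1\leq\diam B_2\leq\diam B_3$, contradicting the no-solution assumption.

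The case analysis bifurcates on which of Lemma \ref{lem2.1}(i), (ii), (iii) holds at each end. Part (c) of Lemma \ref{lem2.2}, which in case (i) supplies a third candidate $A_3\subseteq[1,m+\alpha+\beta]$ with $\diam A_3\leq m+\alpha+\beta-1$, is essential in the tightest branches. In each branch, I would pick $B_1$ and $B_3$ so that their diameters straddle a target value $d$, and then verify that the middle, which occupies the bulk of $[1,N]$, contains a monochromatic $m$-set of diameter at most $d$ via a density argument on whichever color appears more often there. The term $\lfloor(2m-2)/3\rfloor$ should emerge from balancing a three-way optimization involving $\beta$, $\beta'$, and the diameter forced in the middle, while the sporadic $\delta=1$ for $m\in\{2,5\}$ should correspond to rounding artifacts that fail the generic optimization by a single unit.

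For the lower bound, one exhibits an explicit 2-coloring of $[1,N-1]$ with no valid triple. Guided by the form of $N$, the coloring should consist of a small number of monochromatic blocks whose lengths depend on $m\bmod 3$, arranged so that any monochromatic $m$-set $B_1$ in the left region forces any ensuing monochromatic $m$-set $B_2$ to have strictly larger diameter, after which the remaining suffix is too short to accommodate a $B_3$ with $\diam B_3\geq\diam B_2$. The two exceptional cases $m\in\{2,5\}$ admit ad hoc colorings that exploit an extra position unavailable in the generic pattern. The hardest part, however, is the upper-bound case analysis, particularly when both ends fall in Lemma \ref{lem2.1}(i): there one must compare up to three candidates on each side, and the floor function appears through a tight inequality that is saturated precisely by the extremal coloring used for the lower bound.
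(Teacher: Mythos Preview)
Your symmetric plan has a structural gap. Lemma~\ref{lem2.2} is engineered to produce monochromatic $m$-sets with \emph{small} diameter---it gives only upper bounds $\diam A_i\le\cdots$. Such sets are exactly what one wants for $B_1$, but applying the lemma to the reversed suffix yields candidates $A'_1,A'_2$ with upper-bounded diameter as well, and these cannot directly serve as $B_3$: for $B_3$ you need $\diam B_3\ge\diam B_2$, i.e., a \emph{lower} bound. Your phrase ``pick $B_1$ and $B_3$ so that their diameters straddle a target value $d$'' presumes you can force $\diam A'_j\ge d$, but nothing in Lemmas~\ref{lem2.1}--\ref{lem2.2} supplies that. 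Likewise, the ``density argument'' in the middle gives a set with diameter at most roughly $2m-2$, but you also need $\diam B_2\ge\diam B_1$; having only an upper bound on $B_2$ and an upper bound on $B_3$ leaves both required inequalities unverified in general.

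The paper's argument is deliberately \emph{asymmetric} and avoids this trap. It writes the interval as $[-2m+2,\,6m-4+\lfloor\tfrac{2m-2}{3}\rfloor+\delta]$ and uses the leftmost $2m-1$ integers only to extract, by pigeonhole, a monochromatic $m$-set $A_0$ with $\diam A_0\le 2m-2$; this $A_0$ is the universal fallback for $B_1$. Lemmas~\ref{lem2.1}--\ref{lem2.2} are applied only to $[1,3m-2]$, and the resulting large-diameter set $B_1$ of Lemma~\ref{lem2.1} (with $\diam B_1\ge 2m-2$) typically plays the role of the \emph{middle} block, while the sets $A_1,A_2,A_3$ of Lemma~\ref{lem2.2} serve as alternative first blocks. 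The last block is never obtained from the lemmas; instead one analyzes the tail $R_2=[3m-1-\beta,\,6m-4+\lfloor\tfrac{2m-2}{3}\rfloor+\delta]$ directly, through a sequence of seven steps that force long monochromatic stretches (equations \eqref{ends-monochromatic-start}--\eqref{ends-monochromatic-finish}) and then build $B_3$ explicitly with a certified lower bound on its diameter. The $\lfloor\tfrac{2m-2}{3}\rfloor$ does not arise from a three-way balance between two applications of the lemmas, but from inequalities such as \eqref{2ndBasicBound}, \eqref{4thBasicBound} and the Step~4 bound $\alpha\ge\tfrac{2m+2}{3}+\mu+\nu+\delta$ that come out of this one-sided tail analysis. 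Your outline, as written, does not reach any of this machinery.
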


\begin{proof}
Observe that $f(m,m,m;2)\geq 8m-5+\lfloor\frac{2m-2}{3}\rfloor$
follows for $m\geq 2$ by considering the coloring of
$[1,8m-6+\lfloor\frac{2m-2}{3}\rfloor]$ given by the string
$$01^{m-1}0^{m-1}1^{m-1}
0^{\lfloor\frac{2m-2}{3}\rfloor}1^{m-\lfloor\frac{2m-2}{3}\rfloor-1}
0^{m-1}1^{2m-1+\lfloor\frac{2m-2}{3}\rfloor}0^{m-1}.$$ Likewise, $f(2,2,2;2)\geq 12$ follows by considering the coloring of $[1,11]$ given by the string
$$10101101110,$$ and $f(5,5,5;2)\geq 38$
follows by considering the coloring of $[1,37]$ given by the string
$$01^40^41^40^81^40^21^70^3.$$
We proceed
to show that $f(m,m,m;2)\leq 8m-5+\lfloor\frac{2m-2}{3}\rfloor+\delta$, where $\delta=1$ for $m\in \{2,5\}$ and $\delta=0$ otherwise.

Suppose by contradiction that
$\Delta[-2m+2,6m-4+\lfloor\frac{2m-2}{3}\rfloor+\delta]\rightarrow\{0,1\}$
is a $2$-coloring that avoids a monochromatic solution to
$p(m,m,m;2)$ (the problem is translation invariant, so we choose our
interval to begin with $-2m+2$ for notational convenience in the
proof).   From the pigeonhole principle, it follows that there is a
monochromatic $m$-set $$A_0\subseteq [-2m+2,0]\quad\mbox{ with }\quad\diam (A_0)\leq
2m-2.$$ The strategy  is to show that any $2$-coloring of $[1,6m-4+\floor{\frac{2m-2}{3}}+\delta]$ must either contain a monochromatic solution to $p(m,m,m;2)$ or else a monochromatic solution $B<_p C$ to $p(m,m;2)$ with $\diam B\geq 2m-2$. In the latter case, $A_0$, $B$ and $C$ will give the desired monochromatic solution to $p(m,m,m;2)$.

Apply Lemma \ref{lem2.2} to $[1,3m-2]$. If (i) holds, then, since the pigeonhole
principle guarantees there is a monochromatic $m$-set $D_3\subseteq [3m-1,5m-3]$,
we find that  $D_1$, $D_2$ and $D_3$ form a monochromatic solution to
$p(m,m,m;2)$. So we may assume (ii) of Lemma 2.2 holds and we will
apply it and Lemma \ref{lem2.1} to $[1, 3m - 2]$. Let $\alpha$, $\beta$, $\nu$, $\mu$,
$A_1$, $A_2$, $A_3$ and $B_1$ be as in Lemmas \ref{lem2.2} and \ref{lem2.1}.
Let $\Delta(B_1)=\{c_1\}$ with $\{c_1,c_0\}=\{1,0\}$. Thus, when reading the conclusion of Lemma \ref{lem2.1}, we must use $c_1$ in place of $1$ and $c_0$ in place of $0$.
Recall that we have the trivial inequality $$0\leq \alpha+\beta\leq m-1$$ in view of the definition of $\alpha$. Let $$R_1=[1,3m-2-\beta] \quad \und\quad R_2=[3m-1-\beta,6m-4+\lfloor\frac{2m-2}{3}\rfloor+\delta].$$

\subsection*{Step 1:} $|\Delta^{-1}(0)\cap R_2|\geq m$ and $|\Delta^{-1}(1)\cap R_2|\geq m$.

Suppose w.l.o.g. that $|\Delta^{-1}(0)\cap R_2|<m$. Then
$$|\Delta^{-1}(1)\cap R_2|\geq
2m-1+\beta+\lfloor\frac{2m-2}{3}\rfloor+\delta.$$ Let
$$\gamma=|\Delta^{-1}(0)\cap [\first(1,R_2),\last(1,R_2)]|.$$ If
$\alpha\leq \lfloor\frac{2m-2}{3}\rfloor+\gamma+\beta+\delta$, then $A_0$,
$B_1$, and $C=\first_1^{m-1}(1,R_2)\cup \last(1, R_2)$ form a
monochromatic solution to $p(m,m,m;2)$. So we may assume \be\label{seemore}\alpha\geq
\lfloor\frac{2m-2}{3}\rfloor+\gamma+\beta+\delta+1\geq \frac{2m-1}{3}+\delta+\gamma+\beta.\ee Let $y$ be the least
integer such that $\Delta(y)=1$, $y\geq \first(1,R_2)+2m-2-\alpha$ and
$y\geq \first_m(1,R_2)$, and let $$C_1=\first_1^{m-1}(1,R_2)\cup\{y\}.$$ Note $y$ exists in view of $|\Delta^{-1}(1)\cap R_2|\geq 2m-1-\alpha\geq m$.
Observe
that \be\label{davli}|\Delta^{-1}(1)\cap [\first(1,R_2),y]|\leq \max\{2m-1-\alpha,\,m\}=2m-1-\alpha,\ee since
otherwise the minimality of $y$ is contradicted by $y'=\first_{2m-1-\alpha}(1,R_2)$.
Then  \be\label{seemorer}2m-2-\alpha \leq \diam C_1\leq 2m-2-\alpha+\gamma\leq\frac{4m-5}{3}-\delta-\beta,\ee where the latter inequality follows from  \eqref{seemore}.
  By \eqref{davli},  there are at
least
$$2m-1+\beta+\lfloor\frac{2m-2}{3}\rfloor+\delta-(2m-1-\alpha)
=\lfloor\frac{2m-2}{3}\rfloor+\alpha+\beta+\delta$$
integers colored by 1 in
$$R_3=[y+1,6m-4+\lfloor\frac{2m-2}{3}\rfloor+\delta].$$ In view of \eqref{seemore}, we have
$\lfloor\frac{2m-2}{3}\rfloor+\alpha+\beta+\delta\geq m$. Thus, letting
$C_2=\first_1^{m-1}(1,R_3)\cup
\{\last(1,R_3)\}$, it follows from \eqref{seemore} that $$\diam C_2\geq |\Delta^{-1}(1)\cap R_3|-1\geq \frac{2m-4}{3}+\alpha+\beta+\delta-1
\geq \frac{4m-5}{3}+2\delta+2\beta+\gamma-1\geq \frac{4m-5}{3}-1.$$
Comparing the above bound with that of \eqref{seemorer}, we see that if any estimate used in obtaining these bounds can be improved by $1$, then $A_1$, $C_1$ and $C_2$ will be a monochromatic solution to $p(m,m,m;2)$.
Assuming this is not the case, we instead find that $\delta=0$, $\beta=0$, $\gamma=0$,
and  $|\Delta^{-1}(1)\cap R_2|=
2m-1+\beta+\lfloor\frac{2m-2}{3}\rfloor+\delta$.

Since $\delta=0$, we have $$m\geq 3.$$
Since $\delta=\gamma=\beta=0$ and $|\Delta^{-1}(1)\cap R_2|=
2m-1+\beta+\lfloor\frac{2m-2}{3}\rfloor+\delta$, it follows that there are exactly $2m-1+\floor{\frac{2m-2}{3}}$ integers colored by $1$ in $R_2$, all of them consecutive, with the remaining $m-1$ integers colored by $0$. Thus
\be\label{cheesyh}\Delta R_2=0^{\lambda}1^{2m-1+\floor{\frac{2m-2}{3}}}0^{m-1-\lambda},\ee for some $\lambda\in [0,m-1]$.
Since $\alpha=\floor{\frac{2m+1}{3}}>0=\beta$, we cannot have (iii) holding in Lemma \ref{lem2.1}. Since $\alpha>0$ and $\beta=0$, we cannot have (ii) holding in Lemma \ref{lem2.1}. Thus Lemma \ref{lem2.1}(i) must hold.

If $\beta=m-1-\alpha$, then $\alpha=m-1$ in view of $\beta=0$. Moreover, the string  $H_00H_1$ contains exactly $m-1-\alpha=\beta=0$ $c_1$'s. Thus $\Delta R_1=c_1^{m-1-\nu}c_0^{2m-2}c_1^{1+\nu}.$  But now, in view of $m\geq 3$ and \eqref{cheesyh},  it is clear that $[m-\nu,2m-\nu-1]$, $[\first(1,R_2),\first(1,R_2)+m-1]$ and $[\first(1,R_2)+m,\first(1,R_2)+2m-1]$ are three monochromatic sets each of diameter $m-1$, yielding a monochromatic solution to $p(m,m,m;2)$. Therefore we may instead assume $0=\beta<m-1-\alpha$, implying  \be\label{xit}\mu=\nu=\beta=0,\quad\alpha=\floor{\frac{2m+1}{3}}\leq m-2\quad\und\quad m\geq 5,\ee where the final inequality follows from the second.
In this case, Lemma \ref{lem2.1} implies
\be\label{stickygoo}\Delta R_1={c_1}^{m-1}H_0c_0^{m-1}c_1.\ee
We divide the remainder of the proof of Step 1 into two cases.

\subsection*{Case 1.1:}  $c_0=1$.

In this case, we see from \eqref{cheesyh} and \eqref{stickygoo} that
\be\label{oozysauce}\Delta[2m-1,6m-4-\floor{\frac{2m-2}{3}}]=
1^{m-1}0^{\lambda+1}1^{2m-2+\alpha}0^{m-1-\lambda}.\ee  Recall from \eqref{xit} that $$3\leq \alpha=\floor{\frac{2m+1}{3}}\leq m-2$$ and recall from Lemma \ref{lem2.2}(ii)(c) that $A_3\subseteq [1,m+\alpha+\beta]\subseteq [1,2m-2]$, where the second inclusion follows from \eqref{xit},  with $\diam A_3\leq m+\alpha+\beta-1=m+\alpha-1$.

Observe that $m+\lambda+2m-2+\alpha\geq 3m-2+\alpha \geq 2m+2\alpha$ in view of $\alpha\leq m-2$. Consequently, if $\lambda+1\leq \alpha$,  then
it follows from  \eqref{oozysauce} that $C=[2m-1,3m-3]\cup\{3m-2+\alpha\}$ and $D=[3m-1+\alpha, 4m-3+\alpha]\cup \{4m-2+2\alpha\}$ will be monochromatic $m$-subsets with $\diam C=\diam D=m+\alpha-1$, in which case $A_3$, $C$ and $D$ form a monochromatic solution to $p(m,m,m;2)$. Therefore we may instead assume \be\label{sser}\lambda+1\geq\alpha+1.\ee

Observe that $2m-3+\alpha\geq 2m\geq m+\lambda+1$ in view of $\alpha\geq 3$ and $\lambda\leq m-1$. Consequently, it follows from  \eqref{oozysauce} that $C=[2m-1,3m-3]\cup\{3m+\lambda-1\}$ and $D=[3m+\lambda,4m+\lambda-2]\cup\{4m+2\lambda\}$ are monochromatic $m$-subsets with $\diam C=\diam D=m+\lambda\geq m+\alpha$ (in view of \eqref{sser}), in which case $A_3$, $C$ and $D$ form a monochromatic solution to $p(m,m,m;2)$, completing the case.

\subsection*{Case 1.2:} $c_0=0$

In this case, we see from \eqref{cheesyh} and \eqref{stickygoo} that
\be\label{oozysauceII}\Delta[2m-1,6m-4-\floor{\frac{2m-2}{3}}]=
0^{m-1}10^{\lambda}1^{2m-2+\alpha}0^{m-1-\lambda}.\ee  Recall from Lemma \ref{lem2.2}(ii)(c) that $A_3\subseteq [1,m+\alpha+\beta]\subseteq [1,2m-2]$ is a monochromatic $m$-subset, where the second inclusion follows from \eqref{xit}, with $\diam A_3\leq m+\alpha-1$ (in view of $\beta=0$).

If $\lambda\geq \alpha$, then recall that $\alpha\geq 1$ and observe that $2m-2+\alpha\geq m+\alpha$. As a result, we see from \eqref{oozysauceII} that $C=[2m-1,3m-3]\cup\{3m-2+\alpha\}$ and $D=[3m-1+\lambda,4m-3+\lambda]\cup\{4m-2+\lambda+\alpha\}$ are monochromatic $m$-subsets with $\diam C=\diam D= m+\alpha-1$, in which case $A_3$, $C$, and $D$ form a monochromatic solution to $p(m,m,m;2)$.

If $m-\alpha-1\leq \lambda\leq \alpha-1$, then $m-1+\alpha\geq m+\lambda$. As a result, we see from \eqref{oozysauceII} that $C=\{3m-2\}\cup [3m-1+\lambda, 4m-3+\lambda]$ and $D=[4m-2+\lambda,5m-4+\lambda]\cup\{5m-3+2\lambda\}$ are monochromatic $m$-subsets with $\diam C=\diam D=m+\lambda-1\geq 2m-2-\alpha$, in which case $A_1\subseteq [1,3m-2-\alpha]\subseteq [1,3m-3]$, $C$ and $D$ form a monochromatic solution to $p(m,m,m;2)$.

Finally, if $\lambda\leq m-\alpha-1$, then it follows from \eqref{oozysauceII} that $C=\{3m-2\}\cup [4m-2-\alpha, 5m-4-\alpha]$ is a monochromatic $m$-subset with $\diam C=2m-2-\alpha$. Moreover, there are $$2m-2+\alpha-(2m-1-\alpha-1-\lambda)=2\alpha+\lambda\geq 2\alpha\geq 2m-1-\alpha$$ integers greater than $5m-4-\alpha$ that are colored by $1$, where the final inequality follows in view of $3\alpha=3\floor{\frac{2m+1}{3}}\geq 3\frac{2m-1}{3}=2m-1$. Thus, in view of \eqref{oozysauceII}, we conclude  that $D=[5m-3-\alpha,6m-5-\alpha]\cup\{\last(1,R_2)\}$ is a monochromatic $m$-subset with $\diam D\geq 2m-2-\alpha$, in which case $A_1\subseteq [1,3m-2-\alpha]\subseteq [1,3m-3]$, $C$ and $D$ form a monochromatic solution to $p(m,m,m;2)$, completing the case and Step 1.

\bigskip

We may w.l.o.g. assume $\Delta(3m-1-\beta)=1$. Then, since $|\Delta^{-1}(1)\cap R_1|\geq m$ by Step 1,
we must have $\Delta([5m-3-\beta+\alpha, 6m-4+\lfloor\frac{2m-2}{3}\rfloor+\delta])=\{0\}$ else, letting $C=\first_1^{m-1}(1,R_2)\cup \{\last(1,R_2)\}$, it follows that $A_0$,
$B_1$ and $C$ form a monochromatic solution to $p(m,m,m;2)$. But now  we likewise have $\Delta([3m-1-\beta,
4m-2-\alpha+\lfloor\frac{2m-2}{3}\rfloor+\delta])=\{1\}$ else, letting  $C=\first_1^{m-1}(0,R_2)\cup \{\last(0,R_2)\}$, it follows that $A_0$,
$B_1$ and $C$ form a monochromatic solution to $p(m,m,m;2)$.
In summary,
\ber\label{ends-monochromatic-start}
&&\Delta\Big([3m-1-\beta,4m-2-\alpha+\lfloor\frac{2m-2}{3}\rfloor+\delta]\Big)=\{1\}
\quad \und\\ \label{ends-monochromatic-finish}
&&\Delta\Big([5m-3-\beta+\alpha, 6m-4+\lfloor\frac{2m-2}{3}\rfloor+\delta]\Big)=\{0\}.\eer
Note that both of these intervals contain $m-\alpha+\beta+\floor{\frac{2m-2}{3}}+\delta$ integers.

Let \ber\nn C_1&=&[3m-1-\beta,4m-3-\beta]\cup\{4m-2-\alpha+\lfloor\frac{2m-2}{3}\rfloor+\delta\}\quad \und\\ \nn C_2&=&[5m-3-\beta+\alpha,6m-5-\beta+\alpha]\cup \{6m-4+\lfloor\frac{2m-2}{3}\rfloor+\delta\}.\eer When $\beta-\alpha+\floor{\frac{2m-2}{3}}+\delta\geq 0$, these are both monochromatic $m$-subsets of diameter $\diam C_1=\diam C_2=m-1+\beta-\alpha+\floor{\frac{2m-2}{3}}+\delta$.

If $\beta-\alpha+\floor{\frac{2m-2}{3}}+\delta\geq m-1-\alpha$, then $C_1$ and $C_2$ are monochromatic $m$-subsets with $\diam C_1=\diam C_2\geq 2m-2-\alpha$, in which case $A_1$, $C_1$ and $C_2$ form a monochromatic solution to $p(m,m,m;2)$. If $\beta-\alpha+\floor{\frac{2m-2}{3}}+\delta\geq \frac{m-2+\beta}{2}$, then $C_1$ and $C_2$ are monochromatic $m$-subsets with $\diam C_1=\diam C_2\geq m+\ceil{\frac{m-2+\beta}{2}}-1$, in which case $A_2$, $C_1$ and $C_2$ form a monochromatic solution to $p(m,m,m;2)$.
In summary, we may instead assume the contrary of both these inequalities, in turn yielding
\ber\label{1stBasicBound}
&&\beta\leq \frac{m-2}{3}-\delta<m-1\quad\und \\
\label{2ndBasicBound}
&&\alpha\geq \frac{m+1}{6}+\frac{\beta}{2}+\delta>0.
\eer
From \eqref{2ndBasicBound} and  \eqref{1stBasicBound}, we derive that \be \label{starwarz}\alpha>\frac{m-2}{6}+\frac{\beta}{2}=\frac12 (\frac{m-2}{3}+\beta)\geq\beta.\ee
Thus Lemma \ref{lem2.1}(iii) cannot hold.

\smallskip

If $\first(0,R_2)> 5m-2-\beta$, then $\Delta([3m-1-\beta,5m-2-\beta])=\{1\}$, in which case $D_1=[3m-1-\beta,4m-2-\beta]$, $D_2=[4m-1-\beta,5m-2-\beta]$ and $D_3=\first_1^m(0,R_2)$ form a  monochromatic solution to $p(m,m,m;2)$ in view of Step 1. Therefore we may instead assume
\be\label{first0} \first(0,R_2)\leq 5m-2-\beta.\ee

\subsection*{Step 2:} Lemma \ref{lem2.1}(i) holds.

Since Lemma \ref{lem2.1}(iii) does not hold as noted above, assume to the contrary that Lemma \ref{lem2.1}(ii) holds instead. We divide the step into two cases.

\subsection*{Case 2.1:}  $c_1=1$.

In this case, Lemma \ref{lem2.1}(ii) and \eqref{ends-monochromatic-start} yield
$$\Delta\Big([2m-1,4m-2-\alpha+\lfloor\frac{2m-2}{3}\rfloor+\delta]\Big)=\{1\}.$$  Recall that $\alpha+\beta\leq m-1$. Hence, letting
$C=[3m-1-\alpha-\beta, 4m-3-\alpha-\beta]\cup\{4m-2-\alpha\}$, it
follows that $C$ is a monochromatic $m$-set with
$\diam (C)=m-1+\beta$. Consequently, we must have $\first(0,R_2)\geq
5m-2-\beta+\lfloor\frac{2m-2}{3}\rfloor+\delta$, for  otherwise, letting
$D=\first_1^{m-1}(0,
R_2)\cup\{6m-4+\lfloor\frac{2m-2}{3}\rfloor+\delta\}$, it follows in view of Step 1 and Lemma \ref{lem2.2}(ii)(b) that
$A_1\subseteq [1,3m-2-\alpha-\beta]$, $C$ and $D$ form a monochromatic solution to
$p(m,m,m;2)$.
However, since $\lfloor\frac{2m-2}{3}\rfloor+\delta\geq 1$, this is contrary to \eqref{first0}, completing the case.

\subsection*{Case 2.2:}  $c_1=0$.

In this case, we have $\beta\geq 1$ (in view of \eqref{2ndBasicBound} and Lemma \ref{lem2.1}(ii)) and $$\Delta R_1=1^{m-\alpha-\beta-1}0H_20^{m-\beta}$$ with $\Delta[m-\alpha-\beta+1,2m-2]=H_2$ a string of length $m-2+\beta+\alpha$ that contains exactly $\beta-1$ $0$'s and $m-1+\alpha$ $1$'s (in view of $\alpha>0$ from \eqref{2ndBasicBound}). In particular, $\last_{m-1}(1,R_1)\geq m-\beta+1$.

Let $y\leq m-\beta+1$ be the largest integer with $\Delta(y)=1$. Since there are at most $\beta-1$ integers colored by $0$ in $[m-\alpha-\beta+1,m-\beta+1]\subseteq [m-\alpha-\beta+1,2m-2]$ (with the inclusion in view of $\beta\geq 1$), which is an interval of length $\alpha+1\geq \beta+2$ (in view of \eqref{starwarz}), it follows that
\be\label{zoolie}m-2\beta+2\leq y\leq m-\beta+1.\ee
Let
$B=\{y\}\cup \last_1^{m-2}(1,R_1)\cup\{3m-1-\beta\}$. Since $\last_{m-1}(1,R_1)\geq m-\beta+1$, it follows in view of \eqref{zoolie} that $B$ is a monochromatic $m$-subset with  $$2m-2 \leq \diam (B)\leq 2m-3+\beta.$$

If $\first(0,R_2)\leq 4m-1+\floor{\frac{2m-2}{3}}-\beta+\delta$, then $A_0$, $B$ and $\first_1^{m-1}(0,R_2)\cup \{\last(0,R_2)\}$ form a monochromatic solution to $p(m,m,m;2)$ in view of Step 1 and \eqref{ends-monochromatic-finish}. Therefore we may instead assume $$\first(0,R_2)\geq 4m+\floor{\frac{2m-2}{3}}-\beta+\delta>4m-2,$$
with the latter inequality in view of  \eqref{1stBasicBound}.
Hence, letting $C=[3m-1-\beta, 4m-3-\beta]\cup \{4m-2\}$, it follows  that $C$ is a monochromatic $m$-subset with $\diam C= m-1+\beta.$ On the other hand, in view of Step 1, \eqref{first0} and \eqref{ends-monochromatic-finish}, we have $D=\first_1^{m-1}(0,R_2)\cup\{\last(0,R_2)\}$ being a monochromatic $m$-subset with $$\diam D\geq m-2+\beta+\floor{\frac{2m-2}{3}}+\delta\geq m+\beta-1.$$ Thus $A_1$, $C$ and $D$ form a monochromatic solution to $p(m,m,m;2)$ in view of Lemma \ref{lem2.2}(ii)(b) and \eqref{2ndBasicBound}, completing Step 2.

\bigskip

Recall, in view of \eqref{ends-monochromatic-start} and \eqref{ends-monochromatic-finish}, that
$C_1$ and
$C_2$ are both monochromatic $m$-subsets of diameter $\diam C_1=\diam C_2=m-1+\beta-\alpha+\floor{\frac{2m-2}{3}}+\delta$ when $\beta-\alpha+\floor{\frac{2m-2}{3}}+\delta\geq 0$.

If $\beta-\alpha+\floor{\frac{2m-2}{3}}+\delta\geq m-1-\alpha-\mu$, then $C_1$ and $C_2$ are monochromatic $m$-subsets with $\diam C_1=\diam C_2\geq 2m-2-\alpha-\mu$, in which case $A_1$, $C_1$ and $C_2$ form a monochromatic solution to $p(m,m,m;2)$ in view of Lemma \ref{lem2.2}(ii)(c) and Step 2. Likewise, if $\beta-\alpha+\floor{\frac{2m-2}{3}}+\delta\geq \alpha+\beta$, then $C_1$ and $C_2$ are monochromatic $m$-subsets with $\diam C_1=\diam C_2\geq m+\alpha+\beta-1$, in which case $A_3$, $C_1$ and $C_2$ form a monochromatic solution to $p(m,m,m;2)$.
In summary, we may instead assume the contrary of both these inequalities, in turn yielding
\ber\label{3rdBasicBound}
&&\beta\leq \frac{m-2}{3}-\mu-\delta\quad\und \\
\label{4thBasicBound}
&&\alpha\geq \frac{m+\delta}{3}>0.
\eer
In particular, \eqref{3rdBasicBound} and $\beta\geq 0$ yield $$m\geq 3.$$



\subsection*{Step 3:} $c_1=1$.

Assume by contradiction that $c_1=0$. We divide the step into two cases.

\subsection*{Case 3.1:} $\beta=m-1-\alpha$.

In this case, Lemma \ref{lem2.1}(i) yields
$$\Delta[1,3m-2-\beta]=0^{m-1-\beta-\nu}H_01H_10^{1+\nu}$$  with the string
$\Delta[m-\beta-\nu, 2m-2-\nu-\mu]=H_0$ containing exactly $\beta-\mu=m-1-\alpha-\mu$ $0$'s.
We trivially have $m-1-\beta-\nu\geq 0$ and $m-1-\alpha-\mu=\beta-\mu\geq 0$ as these quantities from Lemma \ref{lem2.1}(i) must be nonzero, implying $\nu+\mu\leq m-1$. Thus $2m-2-\nu-\mu\geq m-1$, which means the string $H_01$ covers the interval $[m-\beta-\nu,m]$, thus ensuring  there are at most $\beta-\mu$ integers colored by $0$ in $[m-\beta-\nu,m]$. Since this interval contains at least $\beta+1$ elements, this ensures that there is some  \be\label{y-exists}y\in [m-\beta+\mu,m]\quad \mbox{ with } \quad \Delta(y)=1 \und y\leq \last_{m-1-\beta}(1,R_1),\ee where the later inequality follows by recalling from Lemma \ref{lem2.1}(i) that the string $H_1$ contains exactly $m-2-\beta$ $1$'s. From \eqref{ends-monochromatic-start} and $\alpha=m-1-\beta$, we know $$\Delta([3m-1-\beta,3m-1+\beta+\floor{\frac{2m-2}{3}}+\delta])=\{1\}.$$
Thus $B=\{y\}\cup \last_1^{m-2-\beta}(1,R_1)\cup [3m-1-\beta,3m-1]$ is a monochromatic $m$-subset (in view of \eqref{y-exists}) with $$2m-1\leq \diam B\leq 2m-1+\beta.$$

If $\first(0,R_2)\leq 4m-3-\beta+\floor{\frac{2m-2}{3}}+\delta$, then $A_0$, $B$ and $\first_{1}^{m-1}\cup\{\last(0,R_2)\}$ form a monochromatic solution to $p(m,m,m;2)$ in view of Step 1 and \eqref{ends-monochromatic-finish}. Therefore we may instead assume $$\first(0,R_2)\geq 4m-2-\beta+\floor{\frac{2m-2}{3}}+\delta>4m-2,$$ with the latter inequality following from \eqref{3rdBasicBound}. Thus  $C=[3m-1-\beta,4m-3-\beta]\cup \{4m-2\}$ is a monochromatic $m$-subset with $\diam C=m-1+\beta$. On the other hand, in view of Step 1, \eqref{ends-monochromatic-finish} and \eqref{first0}, we see that $D=\first_{1}^{m-1}(0,R_2)\cup \{\last(0,R_2)\}$ is a monochromatic $m$-subset with $\diam D\geq m-2+\beta+\floor{\frac{2m-2}{3}}+\delta\geq m-1+\beta$, whence $A_1$, $C$ and $D$ give a monochromatic solution to $p(m,m,m;2)$ in view of Lemma \ref{lem2.2}(ii) and $\alpha=m-1-\beta$, completing the case.

\subsection*{Case 3.2:} $\beta<m-1-\alpha$.

In this case, \be\label{cricket}\alpha\leq m-2-\beta\leq m-2,\ee while Lemma \ref{lem2.1}(i) yields $\mu=\nu=0$ and
\be\label{singingl}\Delta[1,3m-2-\beta]=0^{m-1-\beta}H_01^{m-1-\beta}0\ee  with the string
$\Delta[m-\beta, 2m-2]=H_0$ containing exactly $m-1-\alpha$ $0$'s and exactly $\alpha+\beta$ $1$'s.
  Now $m-1-\beta\geq \alpha+1$ by case hypothesis, and $4m-2-\alpha+\floor{\frac{2m-2}{3}}\geq 5m-3-2\alpha$ in view of \eqref{4thBasicBound}.
Thus, if $\alpha\geq 2$, then  \eqref{singingl}, \eqref{cricket} and \eqref{ends-monochromatic-start} imply  $B=[3m-1-\alpha-\beta,3m-3-\beta]\cup [4m-3-\alpha-\beta,5m-3-2\alpha-\beta]$ is a monochromatic $m$-subset with $\diam B=2m-2-\alpha$ and $B\subseteq [3m-1-\alpha-\beta,\first(0,R_2)-1]$. On the other hand, if $\alpha\leq 1$, then \eqref{4thBasicBound} ensures that $\alpha=1$, $m=3$ (recall that we now know $m\geq 3$), $\beta=0$ (in view of the case hypothesis $\beta\leq m-2-\alpha$), and $\diam A_3=m+\alpha+\beta-1=2m-2-\alpha$ (by Lemma \ref{lem2.2}(ii)). In this case, $B=\{3m-3-\beta\}\cup [4m-3-\alpha-\beta,5m-5-\alpha-\beta]$ is a monochromatic $m$-subset with $\diam B=2m-2-\alpha$ and $\min B\geq m+\alpha+\beta+1$.

In view of Step 1, \eqref{ends-monochromatic-finish} and \eqref{first0}, it follows that $C=\first_{1}^{m-1}(0,R_2)\cup \{\last(0,R_2)\}$ is a monochromatic $m$-subset with \be\label{nodstoim}\diam C\geq m-2+\beta+\floor{\frac{2m-2}{3}}+\delta\geq m-2+\floor{\frac{2m-2}{3}}\geq 2m-3-\alpha,\ee with the latter inequality once more in view of \eqref{4thBasicBound}. Thus $A_1$ (if $\alpha\geq 2$) or $A_3$ (if $\alpha=1$), $B$ and $C$ will form a monochromatic solution to $p(m,m,m;2)$ unless equality holds in all the estimates used to derive \eqref{nodstoim}. In particular, we must have $\delta=0$, $\beta=0$, $\alpha=\floor{\frac{m+1}{3}}$ and $\first(0,R_2)=5m-2-\beta=5m-2$.

 Since $\beta=0$, \eqref{singingl} gives  $$\Delta R_1=0^{m-1}H_01^{m-1}0.$$
In view of $\first(0,R_2)=5m-2$ and $\beta=0$, we have $$\Delta([3m-1,5m-3])=\{1\}.$$ Thus, since $\alpha\leq \frac{m+1}{3}$ and $m\geq 3$ imply $4m-2+2\alpha\leq 4m-2+\frac{2m+2}{3}< 5m-2$, it follows that
$D_1=[2m-1,3m-3]\cup \{3m-2+\alpha\}$ and $D_2=[3m-1+\alpha,4m-3+\alpha]\cup \{4m-2+2\alpha\}$ are monochromatic $m$-subsets with $\diam D_1=\diam D_2=m-1+\alpha$.  So $A_3\subseteq [1,m+\alpha+\beta]\subseteq [1,2m-2]$ (the inclusion follows from the case hypothesis), $D_1$ and $D_2$ form a monochromatic solution to $p(m,m,m;2)$ in view of Lemma \ref{lem2.2}(ii)(c) and $\beta=0$, completing the case and Step 3.

\bigskip

In view of Steps 2 and 3 and Lemma \ref{lem2.1}(i), we now have
\ber\label{forgetton}
\Delta[1,3m-2-\beta]=1^{m-1-\beta-\nu}H_00H_11^{1+\nu},
\eer
where $H_0=\Delta[m-\beta-\nu, 2m-2-\mu-\nu]$  is a string containing exactly $m-1-\alpha-\mu$ $1$'s and $\alpha+\beta$ $0$'s and $H_1=\Delta[2m-\mu-\nu,3m-3-\beta-\nu]$ is a string containing exactly $\mu$ $1$'s and $m-2-\beta$ $0$'s. Let $$R'_2=[3m-2-\beta-\nu,6m-4+\lfloor\frac{2m-2}{3}\rfloor]$$
and observe that $\Delta([3m-2-\beta-\nu,4m-2-\alpha+\floor{\frac{2m-2}{3}}+\delta])=\{1\}$ in view of \eqref{ends-monochromatic-start} and \eqref{forgetton}.

If $\first(0,R_2)> 5m-3-\beta-\nu$, then $\Delta([3m-2-\beta-\nu,5m-3-\beta-\nu])=\{1\}$, in which case $D_1=[3m-2-\beta-\nu,4m-3-\beta-\nu]$, $D_2=[4m-2-\beta-\nu,5m-3-\beta-\nu]$ and $D_3=\first_1^m(0,R_2)$ form a  monochromatic solution to $p(m,m,m;2)$ in view of Step 1. Therefore we may instead assume
\be\label{first0-better} \first(0,R_2)\leq 5m-3-\beta-\nu.\ee

Let \be
\nn\gamma_0=|\Delta^{-1}(0)\cap [\first(1,R_2),\last(1,R_2)]|.\ee
We must have \be\label{1stKing} \gamma_0 \leq |\Delta^{-1}(0)\cap R_2|+\alpha-m-\floor{\frac{2m-2}{3}}-\beta\leq |\Delta^{-1}(0)\cap R_2|+\alpha-\frac{5m-4}{3}-\beta,\ee for otherwise $C=\first_{1}^{m-1}(1,R_2)\cup \{\last(1,R_2)\}$ will be a monochromatic $m$-subset (in view of Step 1) with \ber\nn\diam(C)&=&|\Delta^{-1}(1)\cap R_2|+\gamma_0-1=(3m-2+\beta+\floor{\frac{2m-2}{3}}+\delta-|\Delta^{-1}(0)\cap R_2|)+\gamma_0-1
\\\nn &\geq& 2m-2+\alpha,\eer in which case $A_0$, $B_1$ and $C$ form a monochromatic solution to $p(m,m,m;2)$.

\subsection*{Step 4:} $\nu\leq \alpha-1$ and $\alpha\geq \frac{2m+2}{3}+\mu+\nu+\delta$.

To simplify notation, we proceed in two cases.

\subsection*{Case 4.1:} $\beta=m-\alpha+1$.

In this case, Lemma \ref{lem2.2}(ii)(c) shows $\diam A_1\leq m+\beta-\mu-1$ with $A_1\subseteq [1,3m-2-\alpha-\beta]$.
Suppose $\nu\geq \alpha-1-\lfloor\frac{2m-2}{3}\rfloor-\mu-\delta$. Then \eqref{forgetton} and \eqref{ends-monochromatic-start} imply that the interval
$$I=[3m-1-\alpha-\beta+\mu+\lfloor\frac{2m-2}{3}\rfloor+\delta,
4m-2-\alpha+\lfloor\frac{2m-2}{3}\rfloor+\delta]$$ is entirely colored by $1$. Since $|I|\geq m+\beta-\mu$, it follows that $B=\first_1^{m-1}(1,I)\cup \{\first_{m+\beta-\mu}(1,I)\}$ is a monochromatic $m$-subset  with $\diam B=m+\beta-\mu-1$ and $\min B\geq 3m-1-\alpha-\beta$. Thus, in view of \eqref{first0-better}, \eqref{ends-monochromatic-finish} and Step 1, we see that $A_1$, $B$ and $\first_{}^{m-1}(0,R_2)\cup \{\last(0,R_2)\}$ form a monochromatic solution to $p(m,m,m;2)$. So we may instead assume $$\nu\leq \alpha-2-\lfloor\frac{2m-2}{3}\rfloor-\mu-\delta\leq \alpha-2,$$ which rearranges to yield the desired bound for $\alpha$.

\subsection*{Case 4.2:} $\beta<m-\alpha+1$.

In this case, Lemma \ref{lem2.1}(i) implies that $\mu=\nu=0$, so that $\nu\leq \alpha-1$ follows by \eqref{4thBasicBound}. In particular, $\min R'_2\geq 3m-1-\alpha-\beta$. Assume by contradiction that
\be\label{alpha-small}\alpha\leq \frac{2m+1}{3}+\delta.\ee Then
$4m-3\leq 4m-2-\alpha+\lfloor\frac{2m-2}{3}\rfloor+\delta$, so that  \eqref{forgetton} and \eqref{ends-monochromatic-start} ensure
\be\label{soothtooth}\Delta([3m-2-\beta,
4m-3])=\{1\},\ee which is an interval of length $m+\beta\geq m$.
Let $y=\last_2(1,R_1)$. Then $y\leq 2m-2$ by \eqref{forgetton}. If $y<2m-2-\beta$, then $D_1=[2m-2-\beta,3m-3-\beta]$ is a monochromatic $m$-subset (by \eqref{forgetton}) with $\diam D_1=m-1$, in which case $D_1$, $\first_1^m(1,R'_2)$ and $\first_1^m(0,R'_2)$ form a monochromatic solution to $p(m,m,m;2)$ in view of Step 1 and \eqref{soothtooth}. Therefore, we instead have $2m-2-\beta\leq y\leq 2m-2$, which means $D_2=\{y\}\cup [3m-1,4m-3]$ is a monochromatic $m$-subset (by \eqref{soothtooth}) with $$2m-1\leq \diam D_2 \leq 2m-1+\beta.$$ Thus, in view of Step 1 and \eqref{ends-monochromatic-finish}, we must have
$$\first(0,R_2)\geq 4m-2-\beta+\floor{\frac{2m-2}{3}}+\delta,$$ else $A_0$, $D_2$ and $\first_1^{m-1}(0,R_2)\cup \{\last(0,R_2)\}$ form a monochromatic solution to $p(m,m,m;2)$. Consequently, since $5m-4-\beta-\alpha\leq 4m-3-\beta+\floor{\frac{2m-2}{3}}+\delta$ (by \eqref{4thBasicBound}), we see that $D_3=[3m-2-\beta,4m-4-\beta]\cup\{5m-4-\beta-\alpha\}$ is a monochromatic $m$-subset with $\diam D_3=2m-2-\alpha$.
Moreover, in view of Step 1, \eqref{first0-better} and \eqref{ends-monochromatic-finish}, we see that $D_4=\first_1^{m-1}(0,R_2)\cup \{\last(0,R_2)\}$ is a monochromatic $m$-subset with $$\diam D_4\geq m-1+\beta+\frac{2m-4}{3}+\delta\geq m-1+\frac{2m-4}{3}>2m-3-\alpha,$$ where the latter inequality follows in view of \eqref{4thBasicBound}. Thus $A_1$, $D_3$ and $D_4$ form a monochromatic solution to $p(m,m,m;2)$ (in view of Lemma \ref{lem2.2}(ii) and $\alpha\geq 1$), completing the case and Step 4.


\bigskip

In view of Step 4 and the basic inequality $\alpha+\beta\leq m-1$, we have
$$m\geq 6 \quad \und \quad \delta=0,$$
$R'_2\subseteq [3m-1-\alpha-\beta,6m-4+\floor{\frac{2m-2}{3}}]$,
\be\label{KingUp-clone}\alpha\geq \frac{2m+2}{3}+\mu+\nu\quad\und \quad \beta\leq\frac{m-5}{3}-\mu-\nu.\ee

\subsection*{Step 5:} $\last(1,R_2)\geq 5m-2-\alpha-\beta$.

Assume to the contrary that $\last(1,R_2)\leq 5m-3-\alpha-\beta$. Then $$I=[5m-2-\alpha-\beta,6m-4+\floor{\frac{2m-2}{3}}]$$ is an interval entirely colored by $0$ with \be\label{Ibig}|I|= m-1+\alpha+\beta+\floor{\frac{2m-2}{3}}.\ee
In particular, $|I|\geq  m-1+\frac{2m+2}{3}+\beta+\frac{2m-4}{3}\geq 2m+\beta+\frac{m-5}{3}\geq 2m+2\beta$ by \eqref{KingUp-clone}.
Consequently, if $\beta=m-1-\alpha$, then $A_1$, $\first_1^{m-1}(0,I)\cup \{\first_{m+\beta}(0,I)\}$ and $\first_{m+\beta+1}^{2m+\beta-1}(0,I)\cup\{\first_{2m+2\beta}(0,I)\}$ will form a monochromatic solution to $p(m,m,m;2)$ in view of Lemma \ref{lem2.2}(ii)(c). Therefore we may instead assume $\beta<m-1-\alpha$, in which case Lemma \ref{lem2.1}(i) implies $\mu=\nu=0$.

In view of $\beta< m-1-\alpha$, we have $\min I\geq 4m$.
Thus we must have \be\label{contrapos}|\Delta^{-1}(1)\cap [3m-2-\beta,4m-3]|\leq m-1,\ee for otherwise $\first_1^m(1,R'_2)$, $\first_1^{m-1}(0,I)\cup \{\first_{m+\beta}(0,I)\}$ and $\first_{m+\beta+1}^{2m+\beta-1}(0,I)\cup\{\first_{2m+2\beta}(0,I)\}$ will form a monochromatic solution to $p(m,m,m;2)$.
But this means \be\label{wormteel}\first_{\beta+1}(0,R_2)\leq 4m-3.\ee

By \eqref{forgetton} and $\mu=0$, we know $\Delta[2m-1,3m-3-\beta]=0^{m-1-\beta}$.
Thus $$C=\first_1^{m-1}(0,[2m-1,6m-4+\lfloor{\frac{2m-2}{3}}])\cup \{5m-2-\alpha-\beta\}$$ is a monochromatic $m$-subset (in view of \eqref{wormteel}) with $\diam C=3m-3-\alpha-\beta\geq 2m-1$ (in view of $\beta<m-1-\alpha$). On the other hand, $$D=[5m-1-\alpha-\beta,6m-3-\alpha-\beta]\cup\{6m-4+\floor{\frac{2m-2}{3}}\}$$ is a monochromatic $m$-subset (in view of \eqref{Ibig})  with $\diam D=m-3+\alpha+\beta+\floor{\frac{2m-2}{3}}$. However, in view of $\beta\geq 0$ and \eqref{KingUp-clone}, we have $m-3+\alpha+\beta+\floor{\frac{2m-2}{3}}\geq 3m-3-\alpha-\beta$. Hence  $A_0$, $C$ and $D$ form a monochromatic solution to $p(m,m,m;2)$, completing Step 5.

\bigskip

Let $\beta'=m-1-\alpha\geq \beta$. In view of \eqref{KingUp-clone}, we have
\be\label{final-BasicBound} \beta\leq \beta'\leq \frac{m-5}{3}-\mu-\nu.\ee
Let
$y$ be the least integer such that $\Delta(y)=1$,  $y\geq \first_m(1,R'_2)$ and $$y\geq
\first(1,R'_2)+m+\beta'-\mu-1=5m-4-\alpha-\beta-\mu-\nu.$$ Note $y$ exists in view of Steps 1 and 5. Let $$R_3=[3m-2-\beta-\nu,y]$$ and
let $$\gamma'_0=|\Delta^{-1}(0)\cap R_3|\quad\und\quad \gamma''_0=|\Delta^{-1}(0)\cap [\first(1,R_3),\last_2(1,R_3)]|.$$ Note that
$\first(1,R_3)=\min R_3=\min R'_2=3m-2-\beta-\nu$.


If $|\Delta^{-1}(1)\cap R_3|>m$, then we must have $|\Delta^{-1}(1)\cap R_3|-1+\gamma''_0\leq m+\beta'-\mu-1$, else the minimality of $y$ will be contradicted by $\last_2(1,R_3)$. Thus \be\label{merwave-clone} |\Delta^{-1}(1)\cap R_3|\leq m+\max\{0,\beta'-\mu-\gamma''_0\}\leq m+\beta'-\mu.\ee
Consequently, if $\last(0,R_3)-\first(0,R_3)\geq m-2+\max\{\gamma'_0,\beta'-\mu\}$, then
$[\first(0,R_3),\last(0,R_3)]$ is an interval of length at least $m-1+\gamma'_0$ that, by definition of $\gamma'_0$, can contain at most $\gamma'_0$ integers colored by $0$ and, consequently, must contain at least $m-1$ integers colored by $1$. Since $y>\last(0,R_3)$ and is also colored by $1$, this  would mean there are no more than $|\Delta^{-1}(1)\cap R_3|-m\leq \beta'-\mu\leq m-1-\alpha$ (with the second inequality by \eqref{merwave-clone}) integers colored by $1$ in $[\min R_3,\first(0,R_3)-1]=[3m-2-\beta-\nu,\first(0,R_3)-1]$. However, by \eqref{ends-monochromatic-start} and \eqref{forgetton}, we know the first $m-\alpha+\beta+\floor{\frac{2m-2}{3}}+1+\nu\geq m-\alpha$ consecutive integers of $R_3$ are colored by $1$, making this impossible. Therefore we instead conclude that $\last(0,R_3)-\first(0,R_3)\leq m-3+\max\{\gamma'_0,\beta'-\mu\}$, in which case it is easily seen that there is a monochromatic $m$-subset $C_0\subseteq [3m-2-\beta-\nu,y]\subseteq [3m-1-\alpha-\beta,y]$ (where the second inclusion follows from Step 4) with \be\label{C0Diam-clone}\diam C_0=m-1+\max\{\gamma'_0,\beta'-\mu\}\ee (since $\diam R_3\geq m-1+\max\{\gamma'_0,\beta'-\mu\}$ in view of the definitions of $R_3$, $y$ and $\gamma'_0$).

\subsection*{Step 6:} $|\Delta^{-1}(0)\cap R_2|\geq m+\beta+1$.

Suppose by contradiction that $|\Delta^{-1}(0)\cap R_2|\leq m+\beta$, so that \be\label{empress-clone}|\Delta^{-1}(1)\cap R'_2|\geq (3m-1+\beta+\frac{2m-4}{3}+\nu)-m-\beta=\frac{8m-7}{3}+\nu.\ee
Then it follows from \eqref{merwave-clone} and \eqref{empress-clone} that there are at least $\frac{8m-7}{3}-m-\beta'+\mu+\nu=\frac{5m-7}{3}-\beta'+\mu+\nu$ integers colored by $1$ that are greater than $y$. Thus we must have \be\label{whirlit-clone}\frac{5m-7}{3}-\beta'+\mu+\nu\leq m-1+\max\{\gamma'_0,\beta'-\mu\},\ee for otherwise $A_1$, $C_0$ and $\first_{1}^{m-1}(1,R_2\setminus R_3)\cup \{\last(1,R_2)\}$ will be a monochromatic solution to $p(m,m,m;2)$.

If $\gamma'_0\leq \beta'-\mu$, then \eqref{whirlit-clone} implies $\beta'\geq \frac{m-2}{3}+\mu$, contrary to \eqref{final-BasicBound}. On the other hand, if  $\gamma'_0\geq \beta'-\mu+1$, then \eqref{whirlit-clone} instead yields $$\frac{2m-4}{3}-\beta'+\mu+\nu\leq \gamma'_0\leq \gamma_0\leq \alpha-\frac{2m-4}{3},$$ where the final inequality follows from \eqref{1stKing} and the assumption $|\Delta^{-1}(0)\cap R_2|\leq m+\beta$.
Rearranging the above inequality and applying the estimate \eqref{final-BasicBound}, we find that $\alpha\geq m-1+2\mu+2\nu$. Since we trivially have $\beta+\alpha\leq m-1$, we conclude that $\alpha=m-1$ and $\beta=\mu=\nu=0$  with equality holding in all estimates used to derive the bound $\alpha\geq m-1+2\mu+2\nu$. In particular, $0=m-1-\alpha=\beta'=\frac{m-5}{3}-\mu-\nu=\frac{m-5}{3}$, contradicting that we now have $m\geq 6$. This completes Step 6.


\subsection*{Step 7:} $\gamma'_0\geq \beta+2$.

Suppose by contradiction that $\gamma'_0\leq \beta+1\leq \beta'+1$. Then $$m-1+\beta'-\mu\leq \diam C_0\leq m+\beta'$$ by \eqref{C0Diam-clone}.
 Also, in view of Step 6, there are at least $m$ integers colored by $0$ greater than $y$.  As a result,   $$\first(0,R_2\setminus R_3)\geq
5m-3+\lfloor\frac{2m-2}{3}\rfloor-\beta',$$ for  otherwise $A_1$,
$C_0$ and $\first_1^{m-1}(0,
R_2\setminus R_3)\cup\{6m-4+\lfloor\frac{2m-2}{3}\rfloor\}$
form a monochromatic solution to $p(m,m,m;2)$ in view of \eqref{ends-monochromatic-finish} and Lemma \ref{lem2.2}(ii).
Thus  \be\label{pipedragen-clone}\Delta([y,5m-4+\lfloor\frac{2m-2}{3}\rfloor-\beta'])=\{1\}.\ee
We handle two cases.

\subsection*{Case 7.1:} $|\Delta^{-1}(1)\cap R_3|=m$.

In this case, the assumption $\gamma'_0\leq \beta+1$ yields $$y=\min R_2+|\Delta^{-1}(1)\cap R_3|+\gamma'_0-1\leq 4m-2-\nu,$$  while \eqref{final-BasicBound} implies $5m-3+\beta'\leq 5m-4+\lfloor\frac{2m-2}{3}\rfloor-\beta'$.
Thus $$D=[4m-2,5m-4]\cup\{5m-3+\beta'\}$$ is a monochromatic $m$-subset by \eqref{pipedragen-clone} with $\diam D=m-1+\beta'$. Consequently, we must now have
 $$\first(0,R_2\setminus R_3)\geq
5m-2+\lfloor\frac{2m-2}{3}\rfloor-\beta',$$ for  otherwise $A_1$,
$D$ and $\first_1^{m-1}(0,
R_2\setminus R_3)\cup\{6m-4+\lfloor\frac{2m-2}{3}\rfloor\}$
form a monochromatic solution to $p(m,m,m;2)$ in view of Step 6, \eqref{ends-monochromatic-finish},  and Lemma \ref{lem2.2}(ii).

In consequence, $$D'=[4m-1,5m-3]\cup\{5m-3+\lfloor\frac{2m-2}{3}\rfloor-\beta'\}$$ is a monochromatic $m$-subset with $\min D'>y$ and  $\diam D'\geq m-2+\frac{2m-4}{3}-\beta'> m-1+\beta'$ in view of \eqref{final-BasicBound}. Thus $A_1$, $C_0$ and $D'$ form a monochromatic solution to $p(m,m,m;2)$ in view of Lemma \ref{lem2.2}(ii), completing the case.

\subsection*{Case 7.2:} $|\Delta^{-1}(1)\cap R_3|>m$.

In this case,  \eqref{merwave-clone} and the assumption $\gamma'_0\leq \beta+1$ together imply that \ber\nn y&=&\min R_2+|\Delta^{-1}(1)\cap R_3|+\gamma'_0-1\\\nn &\leq& 3m-2-\nu+|\Delta^{-1}(1)\cap R_3|\leq 4m-2+\beta'-\gamma''_0-\mu-\nu\eer and that $B_0=\first_1^m(1,R_3)$ is a monochromatic $m$-subset with $$\diam B_0\leq m+\gamma''_0-1\quad\und\quad \max B_0<y.$$
Consequently, since $5m-3+\beta'\leq  5m-4+\lfloor\frac{2m-2}{3}\rfloor-\beta'$ by \eqref{final-BasicBound}, it follows from \eqref{pipedragen-clone} that  $$C=[4m-2+\beta'-\gamma''_0, 5m-4+\beta'-\gamma''_0]\cup \{5m-3+\beta'\}$$ is a monochromatic $m$-subset  with $\min C\geq y>\max B_0$ and $\diam C=m+\gamma''_0-1$. Thus we must have $\first(0,R_2\setminus R_3)\geq 5m-2+\floor{\frac{2m-2}{3}}-\gamma''_0$ else $B_0$, $C$ and $\first_1^{m-1}(0,R_2\setminus R_3)\cup\{6m-4+\floor{\frac{2m-2}{3}}\}$ will be a monochromatic solution to $p(m,m,m;2)$. But that means $$D=[4m-1+\beta'-\gamma''_0, 5m-3+\beta-\gamma''_0]\cup
\{5m-3+\floor{\frac{2m-2}{3}}-\gamma''_0\}$$ is a monochromatic $m$-subset with $\min D>y$ and $\diam D\geq m-2+\frac{2m-4}{3}-\beta'> m+\beta' -1$ (in view of \eqref{final-BasicBound}). Hence $A_1$, $C_0$ and $D$ form a monochromatic solution to $p(m,m,m;2)$ in view of Lemma \ref{lem2.2}(ii), completing the case and Step 7.

\bigskip


Let $z=\first_{\beta+1}(0,R_2)$ and let $z'=\first_{\beta+2}(0,R_2)$. In view of Step 6, there are at least $m$ integers colored by $0$ greater than $z$. In view of Step 7, we have $z,\,z'\in R_3$ with $z<z'<y$. Thus, in view of the definition of $z'$ and  \eqref{merwave-clone}, we have \ber\nn z'&\leq&
\min R_3 +|\Delta^{-1}(1)\cap R_3\setminus \{y\}|+|\Delta^{-1}(0)\cap [\min R_3,z']|-1\\
\nn &=& \min R_3 +|\Delta^{-1}(1)\cap R_3\setminus \{y\}|+|\Delta^{-1}(0)\cap [\min R_2,z']|-1
\\\nn&=&
\min R_3 +|\Delta^{-1}(1)\cap R_3|+\beta\\\nn&\leq&  4m-2-\nu+\beta'-\mu.\eer
Thus $D=\{z'\}\cup \last_1^{m-1}(0,R_2)$ is a monochromatic $m$-subset with $\min D=z'>z$ and \be\label{ushy-clone}\diam D\geq 2m-2+\frac{2m-4}{3}-\beta'+\mu+\nu\geq 2m-2+\frac{m+1}{3}+2\mu+2\nu,\ee where the second inequality follows from \eqref{final-BasicBound}.

Recall from \eqref{forgetton} and the definition of $z$ that $\Delta(2m-1-\mu-\nu)=0$ with the interval $[2m-1-\mu-\nu,z]$ containing exactly $m$ integers colored by $0$ and at most $$\mu+|\Delta^{-1}(1)\cap R_3\setminus \{y\}|\leq\mu+m+\beta'-\mu-1=m+\beta'-1$$ integers colored by $1$ (in view of \eqref{merwave-clone}). Thus, if there are at least $m-1$ integers colored by $1$ in $[2m-1-\mu-\nu,z]$, then $C=\first_1^m(0,[2m-1-\mu-\nu,z])$ will be a monochromatic $m$-subset with $2m-2\leq C\leq 2m-2+\beta'$, in which case $A_0$, $C$ and $D$ will form a monochromatic solution to $p(m,m,m;2)$ in view of \eqref{ushy-clone} and \eqref{final-BasicBound}. Therefore we may instead assume there are at most $m-2$ integers colored by $1$ in $[2m-1-\mu-\nu,z]$.

In view of \eqref{ends-monochromatic-start} and \eqref{forgetton}, there are at least $m-\alpha+\beta+\frac{2m-4}{3}+1+\nu\geq m-\alpha+1$ integers colored by $1$ in $[2m-1-\mu-\nu,z]$.
Furthermore, in view of \eqref{forgetton}, we know that there are $\alpha+\beta$ integers colored by $0$ less than $2m-1-\mu-\nu$ with at most $m-1-\alpha$ integers colored by $1$ between $\first(0,R_1)$ and $2m-1-\mu-\nu$. Consequently, there exists a monochromatic $m$-subset $C'\subseteq [\first(0,R_1), z]$ with $2m-2\leq \diam C' \leq 2m-2+(m-1-\alpha)=2m-2+\beta'$. Indeed, simply take $C$ as defined in the previous paragraph and replace $\min C$ with the maximal integer colored by $0$ such that the resulting $m$-set $C'$ has $\diam C'\geq 2m-2$.
This integer exists as $|[\first(0,R_1),z]|\geq |\Delta^{-1}(0)\cap [1,z]|+|\Delta^{-1}\cap [\first(0,R_1),z]|\geq m+\alpha+\beta+m-\alpha+1\geq 2m+\beta+1$.
But now $A_0$, $C'$ and $D$ form a monochromatic solution to $p(m,m,m;2)$ in view of \eqref{ushy-clone} and \eqref{final-BasicBound}, completing the proof.
\end{proof}

\section{Acknowledgement}

This research was partially supported by NSF grant DMS0097317.

\end{document}